\documentclass[a4paper,12pt]{amsart}

\usepackage{amsmath,amssymb,amstext}
\usepackage{mathrsfs}
\usepackage{oldgerm}
\usepackage{amsthm}
\usepackage{amsmath}
\usepackage{mathrsfs}
\usepackage[utf8]{inputenc}
\usepackage[mathcal]{euscript}
\usepackage{float}
\usepackage{mathdots}
\usepackage{fullpage}

\usepackage{tikzpeople}
\usepackage[arrow, matrix, curve]{xy}

\usepackage[linesnumbered,ruled,vlined]{algorithm2e}

\usepackage{hyperref}

\newtheorem{theorem}{Theorem}[section]
\newtheorem*{theorem*}{Theorem}
\newtheorem{lem}[theorem]{Lemma}

\theoremstyle{remark}
\newtheorem*{claim}{Claim}
\newtheorem*{rem}{Remark}

\newtheorem*{ex}{Example}
\theoremstyle{definition}
\newtheorem{defi}[theorem]{Definition}
\newtheorem{notation}[theorem]{Notation}

\makeatletter
\newcommand*{\rom}[1]{\expandafter\@slowromancap\romannumeral #1@}
\makeatother

\makeatletter
\newtheorem*{rep@theorem}{\rep@title}
\newcommand{\newreptheorem}[2]{%
\newenvironment{rep#1}[1]{%
 \def\rep@title{#2 \ref{##1}}%
 \begin{rep@theorem}}%
 {\end{rep@theorem}}}
\makeatother

\newreptheorem{theorem}{Theorem}

\DeclareMathOperator{\SL}{SL}

\newcommand{\Z}{\mathbb{Z}}
\newcommand{\N}{\mathbb{N}}

\newcommand{\C}{\mathbb{C}}

\newcommand{\cA}{\mathcal{A}}

\newcommand{\pr}{\mathbb{P}}

\newcommand{\F}{\mathscr{F}}
\newcommand{\B}{\mathscr{B}}
\newcommand{\G}{\mathscr{G}}

\newcommand{\PP}{\mathfrak{P}}
\newcommand{\cP}{\mathcal{P}}

\title{Calculating entries of unitary $\SL_3$-friezes}

\address{Institute of Algebra, Number Theory and discrete Mathematics, Leibniz Universität Hannover, Welfengarten 1, Hannover, DE}

\email{surmann@math.uni-hannover.de}

\author{Lucas Surmann}

\begin{document}

\maketitle

\begin{abstract}
    In this article we consider tame $ \SL_3 $-friezes that arise by specializing a cluster of Plücker variables in the coordinate ring of the Grassmannian $ \G(3,n) $ to $ 1 $. We show how to calculate arbitrary entries of such friezes from the cluster in question. Let $ \F $ be such a cluster. We study the set $ \F_x $ of cluster variables in $ \F $ that share a given index $ x $ and derive a structure Theorem for $ \F_x $. These sets prove central to calculating the first and last non-trivial rows of the frieze. After that, simple recursive formulas can be used to calculate all remaining entries.
\end{abstract}

\section*{Introduction}
Frieze patterns are infinite arrays of numbers with one row of zeroes at the top and bottom and one row of ones below and above, such that each two by two diamond has determinant $ 1 $. They were first introduced in 1971 by Coxeter \cite{Coxeter1971}. Conway and Coxeter then described a correspondence between frieze patterns of positive integers and triangulations of polygons\footnote{Triangulations using only non-crossing lines. We will always use the term triangulation in this sense.} in 1973 \cite{Conway_Coxeter_1973_A}, \cite{Conway_Coxeter_1973_B}. Specifically there is a bijection between integral frieze patterns of width $ w $ and triangulations of a $ (w + 3) $-gon. In 1974, Broline, Crowe and Isaacs further explored this link and found a method to calculate arbitrary entries of frieze patterns from the corresponding triangulation \cite{broline1974geometry}. \\
One can generalize frieze patterns to $ \SL_k $-frieze patterns by requiring that $ k \times k $-diamonds have determinant $ 1 $ and that there are $ k-1 $ rows of zeroes on top and on the bottom. For instance the following is the fundamental region of a tame\footnote{A $ SL_k $-frieze is tame if all $ (k+1) \times (k+1) $-diamonds have determinant $ 0 $.} $ \SL_3 $-frieze pattern:
\[
\xymatrix@C=0.05em@R=.07em{
 0 && 0 && 0 && 0 && 0 && 0 && 0 && 0\\
& 0 && 0 && 0 && 0 && 0 && 0 && 0 && 0\\
&& 1 && 1 && 1 && 1 && 1 && 1 && 1 && 1\\
&&& 4 && 3 && 2 && 5 && 1 && 4 && 5 && 1\\
&&&& 6 && 5 && 4 && 3 && 3 && 7 && 4 && 2\\
&&&&& 9 && 8 && 1 && 8 && 3 && 4 && 7 && 1\\
&&&&&& 13 && 1 && 2 && 6 && 1 && 6 && 2 && 1\\
&&&&&&& 1 && 1 && 1 && 1 && 1 && 1 && 1 && 1\\
&&&&&&&& 0 && 0 && 0 && 0 && 0 && 0 && 0 && 0\\
&&&&&&&&& 0 && 0 && 0 && 0 && 0 && 0 && 0 && 0
}
\]
$ \SL_k $-frieze were introduced by Cordes and Roselle in 1972 \cite{cordes1972generalized}. Classical friezes are then simply $ \SL_2 $-friezes. \\
In the early 2000s cluster algebras were introduced and a connection between frieze patterns and cluster algebras was found. Specifically it was discovered by Fomin and Zelevinsky that the coordinate ring of the Grassmannian $ \G(2,n) $ is a cluster algebra where all cluster variables are Plücker variables and clusters correspond to triangulations of $ n $-gons \cite{Fomin_2003}. Using this, Caldero and Chapoton \cite{caldero2006cluster} showed that all $ \SL_2 $-friezes arise from specializing a given cluster in these cluster algebras to $ 1 $. \\
Scott managed to prove that the coordinate ring of the Grassmannian $ \G(k,n) $ is also a cluster algebra in the case $ k > 2 $ \cite{scott2006grassmannians}. To accomplish this, triangulations of polygons were generalized to the notion of maximal weakly separated families of $ k $-sets. Morier-Genoud, Ovsienko, Schwartz and Tabachnikov then discovered an embedding of the variety of tame $ \SL_k $-friezes into the Grassmannian \cite{morier2014linear}. In 2021 Baur, Faber, Gratz, Serhiyenko, Todorov, Cuntz and Plamondon further expounded on this through a construction called Plücker frieze \cite{baur2021friezes}. If a cluster is specialized to $ 1 $, the resulting frieze will be tame and integral, and $ \SL_k $-friezes that can be derived in this way are called unitary. However the paper also shows that not all tame, integral $ \SL_k $-frieze are unitary. \\
In this paper we focus on the case $ k = 3 $ and unitary friezes. We study the relation between clusters of Plücker variables in the cluster algebra $ \C(\G(3,n)) $ and the $ \SL_3 $-friezes resulting from specializing such clusters to $ 1 $. The goal is to develop a method for calculating arbitrary entries of the frieze with the cluster as the input, similar to Broline, Crowe and Isaacs work on triangulations and $ \SL_2 $-friezes. That being said, our method of calculation is quite different to \cite{broline1974geometry}. \\
In Section 1 we focus on maximal weakly separated families of $ 3 $-subsets of $ [n] $. In the following $ 3 $-sets will be called triangles. Such families $ \F $ of triangles are in bijection with the clusters in $ \C(\G(n,3)) $ that consist of only Plücker variables. We specifically study the set $ \F_x $ of triangles in $ \F $ that contain a given point $ x \in [n] $. This set can be visualised as a graph $ G(\F_x) $, by removing $ x $ from all triangles in $ \F_x $. Through a series of Lemmas we derive a the following Theorem about the possible structure of this graph:
\begin{theorem*}[\ref{thm:triangulation}]
    Let $ \F $ be a maximal family of weakly separated triangles in $ [n] $. Let $ x \in [n] $. Let $ v_1,\dots,v_r $ be those vertices in $ G(\F_x) $ that have degree at least $ 2 $. Then the full subgraph in $ v_1,\dots,v_r $ is a triangulation of an $ r $-gon. Moreover, all other vertices of $ G(\F_x) $ have degree $ 1 $ and their edge connects to one of the $ v_i $.
\end{theorem*}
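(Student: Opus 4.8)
\emph{Proof plan.} Fix $x$ and let $P=[n]\setminus\{x\}$, regarded as the vertices of a convex $(n-1)$-gon with the cyclic order inherited from $[n]$; to a triangle $\{x,a,b\}\in\F_x$ attach its \emph{shadow} $\{a,b\}$, so that the shadows are exactly the edges of $G(\F_x)$. The first task is the dictionary: two triangles $\{x,a,b\},\{x,c,d\}\in\F_x$ are weakly separated if and only if their shadows do not cross as chords of $P$. This is immediate when the shadows share a vertex or coincide; when they are disjoint, the symmetric differences of the two triangles are exactly the two shadows, and $x$, belonging to both triangles, can never appear in a forbidden cyclic quadruple, so weak separation becomes precisely the statement that the two shadow chords do not interleave, i.e.\ do not cross. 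Hence $G:=G(\F_x)$ is a set of pairwise non-crossing chords of $P$; draw it crossing-free inside the polygon, cutting $P$ into cells. Finally, since $\F$ corresponds to a cluster of $\C(\G(3,n))$ it contains every frozen variable, so in particular the three frozen triangles through $x$ lie in $\F_x$; their shadows are three consecutive sides of $P$, so $G$ always contains the boundary path $x-2,\,x-1,\,x+1,\,x+2$, and consequently $x-1$ and $x+1$ are always vertices of $G$ of degree at least $2$.

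The mechanism turning maximality into structure is this. Suppose $u,v\in V(G)$, $\{u,v\}\notin G$, and the chord $uv$ crosses no edge of $G$; then by the dictionary $\{x,u,v\}$ is weakly separated from every triangle of $\F_x$, so since $\{x,u,v\}\notin\F$, maximality of $\F$ forces $\{x,u,v\}$ to fail weak separation against some $T\in\F$ with $x\notin T$. I would then dissect this failure. As $x$ lies in $\{x,u,v\}$ but not in $T$, the forbidden cyclic quadruple must pair $x$ --- together with one of $u,v$ --- against two elements of $T$, or else pair $u$ and $v$ against two elements of $T$. Parametrising the points of $P$ by the linear order $x+1,x+2,\dots,x-1$ in which they are seen from $x$ (so that a chord $xw$ crosses a chord $pq$ exactly when $w$ lies strictly between $p$ and $q$ in this order), each possibility becomes a concrete constraint: $T$ cuts out an arc of $P$ that contains one of $u,v$ in its interior while forcing all $G$-neighbours of the other endpoint out of it. Using in addition that $T$ itself must be weakly separated from every triangle of $\F_x$ and from the frozen triangles through $x$, the aim is to show that such a $T$ can exist only when the chord $uv$ lies on the boundary of the region of the disk swept out by the cells of $G$ --- i.e.\ only ``peripheral'' chords can be refused. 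In other words, any chord of $P$ that crosses nothing in $G$, is not already in $G$, and lies in the interior of that region can in fact be added, contradicting maximality.

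Granting this, the Theorem follows by a clean-up. Every bounded cell of $G$ must be a triangle, for an ear-diagonal of a larger cell would be an addable interior chord. Likewise $G$ is connected, since the shortest chord bridging two of its components would again be addable. Now delete from $G$ all vertices of degree $1$ together with their incident edges: a triangular cell contains no point of $P$ in its interior, hence carries no pendant edge, so every pendant edge points only into the unbounded region and this deletion removes no cell. What remains is exactly the induced subgraph on the degree-$\ge 2$ vertices $v_1,\dots,v_r$; it is crossing-free, connected, has every bounded cell triangular, and admits no interior chord addition, which forces it to be a triangulation of the $r$-gon on $v_1,\dots,v_r$ in their cyclic order on $P$ (with $r\ge 2$ since $x-1,x+1$ are present; the case $r=2$ is the degenerate triangulation consisting of a single edge). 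Finally, a degree-$1$ vertex $u$ of $G$ has a unique neighbour $w$; were $\deg_G(w)=1$ as well, the edge $\{u,w\}$ would form its own connected component of $G$, contradicting connectedness, so $\deg_G(w)\ge 2$ and $w$ is one of the $v_i$, as claimed.

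The decisive step --- and the one I expect to be genuinely delicate --- is the dissection in the second paragraph. Because weak separation only records cyclic interleavings of symmetric differences, converting ``$\{x,u,v\}$ clashes with an $x$-avoiding triangle of $\F$'' into a statement about the geometry of $G$ requires a careful case analysis of how the three elements of such a $T$ can be positioned relative to $x,u,v$, together with a systematic exploitation of all the weak-separation constraints binding $T$; this is exactly where the structure of $\F$ away from $x$ enters, and is where I expect the chain of lemmas preceding the Theorem to do its work.
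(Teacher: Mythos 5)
Your reduction is sound as far as it goes: the ``shadow'' dictionary (for $A,B\in\F_x$, weak separation of $A$ and $B$ is equivalent to non-crossing of the chords $A\setminus\{x\}$ and $B\setminus\{x\}$) is correct, and it is exactly the viewpoint the paper works with via the graph $G(\F_x)$ and the order $<_x$. The clean-up in your third paragraph (triangular cells, connectedness, deleting pendant vertices, neighbour of a leaf has degree $\ge 2$) would also go through \emph{if} your central claim were available. But that central claim --- that any chord between vertices of $G$ which crosses no edge of $G$ and is ``interior'' can be added to $\F$, so that only ``peripheral'' chords can be obstructed by a triangle not containing $x$ --- is precisely the content of the theorem, and you explicitly do not prove it. This is not a deferred detail: it is where all the difficulty lives, and as stated it is not even a well-posed assertion, because ``the boundary of the region swept out by the cells'' is undefined for a chord system with pendant edges. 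A concrete warning sign: if $u_1,u_2$ are two consecutive leaves attached to the same triangulation point $v_i$, the chord $u_1u_2$ crosses no edge of $G$ and is refused ($u_1$ has degree $1$), yet the obstructing triangle is \emph{not} any of the nearby border triangles $\{v_i,u_j,u_{j+1}\}$ (those share two vertices with $\{x,u_1,u_2\}$ and hence cannot cross it); identifying what does obstruct it already requires the kind of analysis you have postponed. Until you can formulate and prove a precise addability criterion, the argument has a genuine gap.

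It is also worth noting that the paper does not prove your strong addability claim; it proves something weaker that suffices. The chain Lemma \ref{lem:cases_crossing} $\to$ Lemma \ref{lem:three_cases} $\to$ Lemma \ref{lem:empty_interval} $\to$ Lemma \ref{lem:new_triangle} shows only that for nested triangles $\{x,a,b\}\succ\{x,c,d\}$ in $\F_x$ there exists \emph{some} $y$ with $\{x,a,y\},\{x,y,b\}\in\F$; the triangulation on the degree-$\ge 2$ vertices is then assembled by induction, inserting one such subdividing vertex at a time, rather than by certifying each individual ear-diagonal or bridging chord. If you pursue your plan, you should either prove the subdivision statement of Lemma \ref{lem:new_triangle} (the case analysis over the possible positions of an $x$-avoiding obstructing triangle $T$, played off against weak separation of $T$ from the rest of $\F_x$, is unavoidable) or find a correct precise form of your addability dichotomy; at present neither is supplied.
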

We also show that certain additional triangles necessarily exist depending on $ \F_x $. Moreover we show that the reverse of the structure theorem also holds, with some prerequisites about the graph.\\
In Section 2 we apply this to the Grassmannian coordinate ring. We describe a series of mutations in $ \F $. These allow us to calculate the value of the Plücker coordinates of the form $ \Delta^{i,i+1,i+3} $ and $ \Delta^{i,i+2,i+3} $. We then use Plücker relations to derive a recursive formula for all other nearly continuous Plücker coordinates\footnote{Nearly continuous Plücker coordinates are Plücker coordinates of the form $ \Delta^{i,i+1,j} $}. \\
Finally in Section $ 3 $ we show how this relates to the entries of tame $ \SL_3 $-friezes that arise from specializing a cluster of Plücker variables to $ 1 $.

\section{Weakly separated families of triangles}

\setcounter{page}{1}

\begin{defi}
    Let $ n \in \N, r \geq 3 $ and let $ (a_1,\dots, a_r) $ be a tuple of pairwise distinct elements of $ [n] = \{1,\dots,n\} $. The elements are called cyclically ordered if they are ordered in the natural ordering of $ \N $ or if there is a $ 2 \leq i \leq r $ such that
    \[
    a_i < a_{i+1} < \dots < a_r < a_1 < a_2 < \dots < a_{i-1}.
    \]
\end{defi}

\begin{defi}
    For $ a, b \in [n] $ with $ a \neq b $ the interval $ (a,b) $ is defined as
    \[
    (a,b) := \{ x \in [n] \; | \; (a,x,b) \text{ is cyclically ordered} \}
    \]
The intervals $ [a,b], (a,b] $ and $ [a,b) $ are defined analogously.
\end{defi}

\begin{defi}[\cite{leclerc1998quasicommuting}]
Let $ A, B \subset [n] $ be subsets. If there are $ a, c \in A \backslash B $ and $ b, d \in B \backslash A $, such that $ (a,b,c) $ and $ (b,c,d) $ are in cyclic order, then $ A $ and $ B $ are said to be crossing. Otherwise they are called weakly separated.
\end{defi}

From now on we only consider subsets $ A \subset [n] $ with $ |A| = 3 $. we call these triangles.

\begin{rem}
Let $ [n] $ be ordered points on a circle. Two triangles $ A, B \subset [n] $ cross if and only if there are lines $ \overline{ab} $ and $ \overline{cd} $ that cross, such that $ a,b \in A \backslash B $ and $ c, d \in B \backslash A $.
\end{rem}

\begin{ex}
Figures 1 and 2 show some constellations in which two triangles do or do not cross.
\end{ex}

\begin{figure}[H]
\begin{tikzpicture}
\coordinate (center) at (0,0);
\coordinate (a) at ($(center) + (100:2cm)$);
\coordinate (b) at ($(center) + (150:2cm)$);
\coordinate (c) at ($(center) + (200:2cm)$);
\coordinate (d) at ($(center) + (250:2cm)$);
\coordinate (e) at ($(center) + (300:2cm)$);
\coordinate (f) at ($(center) + (350:2cm)$);
\draw[dashed] (center) circle(2cm);
\draw (a) node[shift={(100:4pt)}]{a} -- (b) node[shift={(150:4pt)}]{b} 
	-- (c) node[shift={(200:4pt)}]{c} --cycle;
\draw (d) node[shift={(250:4pt)}]{d} -- (e) node[shift={(300:4pt)}]{e}
	-- (f) node[shift={(350:4pt)}]{f} --cycle;

\coordinate (center2) at (5,0);
\coordinate (a) at ($(center2) + (0:2cm)$);
\coordinate (b) at ($(center2) + (60:2cm)$);
\coordinate (c) at ($(center2) + (300:2cm)$);
\coordinate (d) at ($(center2) + (120:2cm)$);
\coordinate (e) at ($(center2) + (240:2cm)$);
\draw[dashed] (center2) circle(2cm);
\draw (a) node[shift={(0:4pt)}]{a} -- (b) node[shift={(60:4pt)}]{b} 
	-- (c) node[shift={(300:4pt)}]{c} --cycle;
\draw (a) -- (d) node[shift={(120:4pt)}]{d}
	-- (e) node[shift={(240:4pt)}]{e} --cycle;

\coordinate (center3) at (10,0);
\coordinate (a) at ($(center3) + (210:2cm)$);
\coordinate (b) at ($(center3) + (330:2cm)$);
\coordinate (c) at ($(center3) + (60:2cm)$);
\coordinate (d) at ($(center3) + (120:2cm)$);
\draw[dashed] (center3) circle(2cm);
\draw (a) node[shift={(210:4pt)}]{a} -- (b) node[shift={(330:4pt)}]{b} 
	-- (c) node[shift={(60:4pt)}]{c} --cycle;
\draw (a) -- (b) -- (d) node[shift={(120:4pt)}]{d} --cycle;
\end{tikzpicture}
\caption{Noncrossing Triangles}
\end{figure}
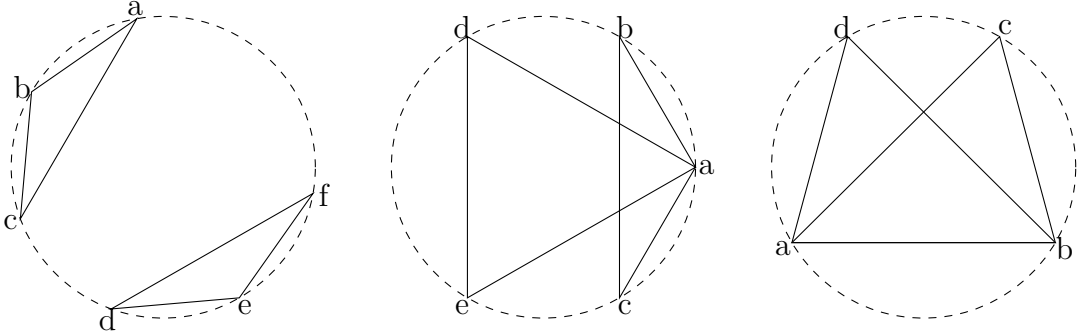

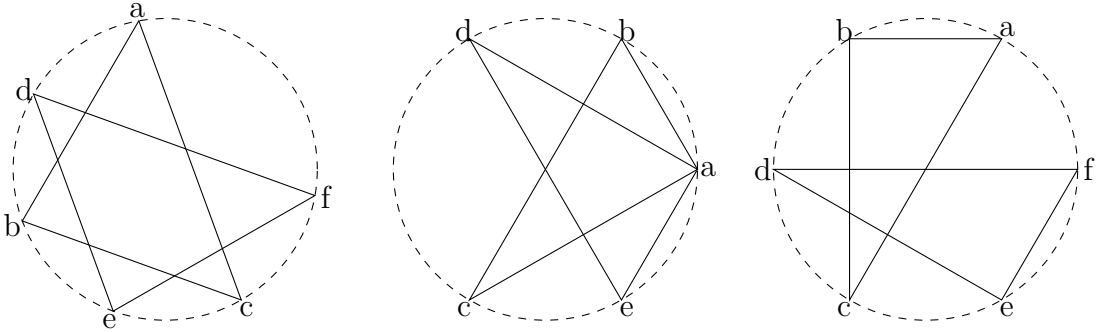
\begin{figure}[H]
\begin{tikzpicture}
\coordinate (center) at (0,0);
\coordinate (a) at ($(center) + (100:2cm)$);
\coordinate (b) at ($(center) + (200:2cm)$);
\coordinate (c) at ($(center) + (300:2cm)$);
\coordinate (d) at ($(center) + (150:2cm)$);
\coordinate (e) at ($(center) + (250:2cm)$);
\coordinate (f) at ($(center) + (350:2cm)$);
\draw[dashed] (center) circle(2cm);
\draw (a) node[shift={(100:4pt)}]{a} -- (b) node[shift={(200:4pt)}]{b} 
	-- (c) node[shift={(300:4pt)}]{c} --cycle;
\draw (d) node[shift={(150:4pt)}]{d} -- (e) node[shift={(250:4pt)}]{e}
	-- (f) node[shift={(350:4pt)}]{f} --cycle;

\coordinate (center2) at (5,0);
\coordinate (a) at ($(center2) + (0:2cm)$);
\coordinate (b) at ($(center2) + (60:2cm)$);
\coordinate (c) at ($(center2) + (240:2cm)$);
\coordinate (d) at ($(center2) + (120:2cm)$);
\coordinate (e) at ($(center2) + (300:2cm)$);
\draw[dashed] (center2) circle(2cm);
\draw (a) node[shift={(0:4pt)}]{a} -- (b) node[shift={(60:4pt)}]{b} 
	-- (c) node[shift={(240:4pt)}]{c} --cycle;
\draw (a) -- (d) node[shift={(120:4pt)}]{d}
	-- (e) node[shift={(300:4pt)}]{e} --cycle;

\coordinate (center3) at (10,0);
\coordinate (a) at ($(center3) + (60:2cm)$);
\coordinate (b) at ($(center3) + (120:2cm)$);
\coordinate (c) at ($(center3) + (240:2cm)$);
\coordinate (d) at ($(center3) + (180:2cm)$);
\coordinate (e) at ($(center3) + (300:2cm)$);
\coordinate (f) at ($(center3) + (0:2cm)$);
\draw[dashed] (center3) circle(2cm);
\draw (a) node[shift={(60:4pt)}]{a} -- (b) node[shift={(120:4pt)}]{b} 
	-- (c) node[shift={(240:4pt)}]{c} --cycle;
\draw (d) node[shift={(180:4pt)}]{d} -- (e) node[shift={(300:4pt)}]{e}
    -- (f) node[shift={(0:4pt)}]{f} --cycle;
\end{tikzpicture}
\caption{Crossing Triangles}
\end{figure}

\begin{lem} \label{lem:cases_crossing}
    Let $ A = \{a,b,c\} $ with $ (a,b,c) $ cyclically ordered and $ B $ be crossing triangles in $ [n] $. Then at least one of the following cases applies.
    \begin{itemize}
        \item[(i)] $ B = \{u,v,w\} $ with $ u \in (a,b), v \in (b,c) $ and $ w \neq b $.
        \item[(ii)] $ B = \{u,v,w\} $ with $ u \in (a,b), w \in (c,a) $ and $ v \neq a $.
        \item [(iii)] $ B = \{u,v,w\} $ with $ v \in (b,c), w \in (c,a) $ and $ u \neq c $.
    \end{itemize}
\end{lem}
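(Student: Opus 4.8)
The plan is to invoke the Remark and then run a short case check organised by the cyclic symmetry of the three conclusions. By the Remark, $ A $ and $ B $ cross exactly when some chord $ \overline{pq} $ with $ p,q\in A\setminus B $ meets a chord $ \overline{rs} $ with $ r,s\in B\setminus A $. Since $ A\setminus B $ and $ B\setminus A $ are disjoint, the four points $ p,q,r,s $ are distinct, and the chords $ \overline{pq} $ and $ \overline{rs} $ cross if and only if $ r $ and $ s $ lie in the two different open arcs into which $ \{p,q\} $ divides the circle $ [n] $; relabelling $ r,s $ if necessary, $ (p,r,q,s) $ is cyclically ordered, that is, $ r\in(p,q) $ and $ s\in(q,p) $.

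Next I would cut down the number of configurations. The presentation $ (a,b,c) $ of $ A $ can be replaced by the cyclic shift $ (b,c,a) $ or by $ (c,a,b) $ without changing the hypothesis, and this operation permutes the conclusions (i), (ii), (iii) cyclically while cycling the unordered pair $ \{p,q\}\subseteq A $ through $ \{a,b\},\{b,c\},\{c,a\} $. Since $ \{p,q\} $ is a two-element subset of $ A=\{a,b,c\} $, after such a re-presentation I may assume $ \{p,q\}=\{a,b\} $, so that $ c $ is the remaining vertex of $ A $. It then suffices to prove, in this reduced situation, that $ B $ satisfies case (i) or case (ii); reversing the shift yields the general statement.

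In the reduced situation $ \overline{ab} $ is a chord with both endpoints in $ A\setminus B $, and it crosses a chord $ \overline{rs} $ with $ r,s\in B\setminus A $, which we may name so that $ r\in(a,b) $ and $ s\in(b,a) $. Because $ s\notin A $ we have $ s\neq c $, and the identity $ (b,a)=(b,c)\sqcup\{c\}\sqcup(c,a) $, which holds since $ a,b,c $ occur in this cyclic order on $ [n] $, leaves only $ s\in(b,c) $ or $ s\in(c,a) $. If $ s\in(b,c) $, then $ B $ realises case (i): take $ u=r\in(a,b) $, $ v=s\in(b,c) $, and let $ w $ be the third vertex of $ B $; then $ w\neq b $ because $ b\in A\setminus B $ is not in $ B $. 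If $ s\in(c,a) $, then $ B $ realises case (ii): take $ u=r $, $ w=s\in(c,a) $, and let $ v $ be the third vertex of $ B $; then $ v\neq a $ because $ a\in A\setminus B $.

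The argument is essentially bookkeeping, and I do not expect a serious obstacle. The two points that deserve a moment of care are the translation of the statement that $ \overline{pq} $ and $ \overline{rs} $ cross into the statement that $ r $ and $ s $ lie in the two open arcs determined by $ \{p,q\} $, which is clear from the circular picture, and the observation — to be re-made in each sub-case — that the vertex of $ A $ which the conclusion keeps out of $ B $ is always one of the chord endpoints $ p,q $, hence genuinely not an element of $ B $. Everything else, including the arc identity $ (b,a)=(b,c)\sqcup\{c\}\sqcup(c,a) $ and its rotations, follows immediately from the definition of a cyclically ordered tuple.
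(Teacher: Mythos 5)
Your proof is correct and takes essentially the same route as the paper's: both pass to the geometric picture of a crossing chord pair (one chord in $A\setminus B$, one in $B\setminus A$) and then case-split on which of the three arcs $(a,b),(b,c),(c,a)$ the points of $B\setminus A$ occupy, using cyclic symmetry to normalize. Your normalization by the $A$-chord $\overline{pq}$ (rather than by the arc containing one point of $B\setminus A$, as the paper does) makes the excluded vertex of $A$ immediate since it is an endpoint of that chord, but the substance is identical.
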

\begin{proof}
    Since $ A $ and $ B $ are crossing there are $ d, e \in A \backslash B $ and $ x, y \in B \backslash A $ such that $ (d,x,e) $ and $ (x,e,y) $ are in cyclical order. We have $ (a,b) \cup (b,c) \cup (c,a) = [n] \backslash A $. Since $ x,y \notin A $ this implies that $ x, y \in (a,b) \cup (b,c) \cup (c,a) $. Moreover $ (d,x,e) $ and $ (x,e,y) $ being cyclically ordered implies $ x \in (d,e) $ and $ y \notin (d,e) $. Thus $ x $ and $ y $ are in different ones of the three intervals. \\
    Assume without loss of generality that $ x \in (a,b) $. If $ y \in (b,c) $ we must have $ e = b $. It follows that $ b \notin B $ and we are done. \\ Therefore we can assume $ y \in (c,a) $. If $ d = b $ then $ x \notin (d,e) $ regardless of whether $ e = a $ or $ e = c $. Contradiction. If $ d = c $ then $ y \in (d,e) $. But then $ (x,e,y) $ cannot be cyclically ordered. Hence $ d = a $ which implies $ a \notin B $.
\end{proof}

\begin{defi}[\cite{leclerc1998quasicommuting}]
Let $ \F \subset \PP([n]) $ a set of subsets of $ [n] $. Then $ \F $ is said to be {\em a weakly separated family}, if its elements are pairwise non-crossing. \\
We call $ \F $ a maximal weakly separated family, if there is no weakly separated family $ \F^\prime $ with $ \F \subset \F^\prime $.
\end{defi}

\begin{rem}
 If $ \F $ is a maximal weakly separated family, then every triangle $ A \subset [n] $ that is not in $ \F $ crosses at least one triangle in $ \F $. Otherwise $ A $ could be added to $ \F $ and the family would not be maximal. Hence, if we want to show that a given triangle $ B $ is in a maximal weakly separated family $ \F $, it suffices to show that $ B $ does not cross any of the triangles in $ \F $.
\end{rem}

\begin{defi}
 Let $ \F $ be a weakly separated family of triangles in $ [n] $ and $ x \in [n] $. Then let
 \[
  \F_x := \{C \in \F \: | \: x \in C \}
 \]
 be the subfamily of triangles containing $ x $.
\end{defi}

\begin{defi}
    For $ a \neq b \in [n] \backslash \{x\} $ set
    \[
    a <_x b :\Leftrightarrow (x,a,b) \text{ is cyclically ordered.}
    \]
\end{defi}

$ <_x $ is a total order on $ [n] \backslash \{x\} $.

\begin{defi}
 Let $ A, B \in \F_x $, $ A = \{x,a,b\} $ and $ B = \{x,c,d\} $ for some $ a <_x b $ and $ c <_x d $. We set
 \[
  A \preceq B :\Leftrightarrow a \leq_x c \text{ and } d \leq_x b
 \]
\end{defi}

\begin{defi}
 Let $ \F $ be a weakly separated family of triangles in $ [n] $. Then we define
 \[
  (A,B)_x := \{ C \in \F_x \: | \: A \prec C \prec B \}
 \]
for $ A, B \in \F_x $ with $ A \preceq B $.
\end{defi}

\begin{lem}\label{lem:three_cases}
 Let $ \F $ be a maximal weakly separated family of triangles in $ [n] $ and $ x \in [n] $. Let $ A = \{x,a,b\}, B = \{x,c,d\} \in \F_x $ with $ A \preceq B $, $ a \neq c $ and $ d \neq b $, in other words $ a <_x c $ and $ d <_x b $. If $ C := \{x,a,d\} \notin \F $, then there is a $ D \in \F $ crossing $ C $, fulfilling one of the following cases:
 \begin{itemize}
  \item[(i)] $ D = \{c,y,z\} $ with $ y,z \in (d,b] $,
  \item[(ii)] $ D = \{c,b,z\} $ with $ z \in (b,x) $ or
  \item[(iii)] $ D = \{x,y,z\} $ with $ y \in (a,c] $ and $ z \in (d,b] $.
 \end{itemize}

\begin{figure}[H]
\begin{tikzpicture}
\coordinate (center) at (0,0);
\coordinate (x) at ($(center) + (270:2cm)$);
\coordinate (a) at ($(center) + (210:2cm)$);
\coordinate (b) at ($(center) + (330:2cm)$);
\coordinate (c) at ($(center) + (150:2cm)$);
\coordinate (d) at ($(center) + (30:2cm)$);
\coordinate (y) at ($(center) + (350:2cm)$);
\coordinate (z) at ($(center) + (10:2cm)$);
\draw[dashed] (center) circle(2cm);
\draw[dashed] (x) node[shift={(270:4pt)}]{x} -- (a) node[shift={(210:4pt)}]{a} 
	-- (b) node[shift={(330:4pt)}]{b} --cycle;
\draw[dashed] (x) -- (c) node[shift={(150:4pt)}]{c}
	-- (d) node[shift={(30:4pt)}]{d} --cycle;
\draw[dotted] (x) -- (a)
	-- (d) --cycle;
\draw (c) -- (y) node[shift={(350:4pt)}]{y}
	-- (z) node[shift={(10:4pt)}]{z} --cycle;
	
\coordinate (center2) at (5,0);
\coordinate (x) at ($(center2) + (270:2cm)$);
\coordinate (a) at ($(center2) + (210:2cm)$);
\coordinate (b) at ($(center2) + (330:2cm)$);
\coordinate (c) at ($(center2) + (150:2cm)$);
\coordinate (d) at ($(center2) + (30:2cm)$);
\coordinate (z) at ($(center2) + (300:2cm)$);
\draw[dashed] (center2) circle(2cm);
\draw[dashed] (x) node[shift={(270:4pt)}]{x} -- (a) node[shift={(210:4pt)}]{a} 
	-- (b) node[shift={(330:4pt)}]{b} --cycle;
\draw[dashed] (x) -- (c) node[shift={(150:4pt)}]{c}
	-- (d) node[shift={(30:4pt)}]{d} --cycle;
\draw[dotted] (x) -- (a)
	-- (d) --cycle;
\draw (c) -- (b) -- (z) node[shift={(300:4pt)}]{z} --cycle;

\coordinate (center3) at (10,0);
\coordinate (x) at ($(center3) + (270:2cm)$);
\coordinate (a) at ($(center3) + (210:2cm)$);
\coordinate (b) at ($(center3) + (330:2cm)$);
\coordinate (c) at ($(center3) + (150:2cm)$);
\coordinate (d) at ($(center3) + (30:2cm)$);
\coordinate (y) at ($(center3) + (180:2cm)$);
\coordinate (z) at ($(center3) + (0:2cm)$);
\draw[dashed] (center3) circle(2cm);
\draw[dashed] (x) node[shift={(270:4pt)}]{x} -- (a) node[shift={(210:4pt)}]{a} 
	-- (b) node[shift={(330:4pt)}]{b} --cycle;
\draw[dashed] (x) -- (c) node[shift={(150:4pt)}]{c}
	-- (d) node[shift={(30:4pt)}]{d} --cycle;
\draw[dotted] (x) -- (a)
	-- (d) --cycle;
\draw (x) -- (y) node[shift={(180:4pt)}]{y}
	-- (z) node[shift={(0:4pt)}]{z} --cycle;
\end{tikzpicture}
\caption{Cases (i), (ii), (iii)}
\end{figure}
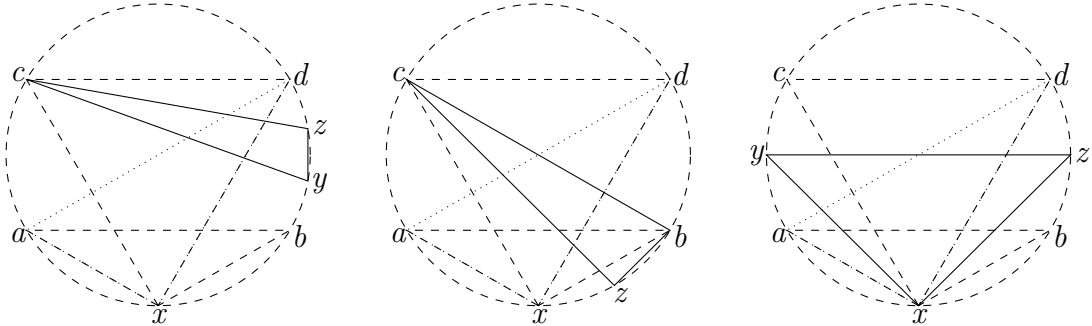

\end{lem}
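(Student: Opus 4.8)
The plan is: invoke maximality to produce a triangle $ D \in \F $ crossing $ C = \{x,a,d\} $, and then exploit that $ D $ crosses neither $ A $ nor $ B $ — all three lie in the weakly separated family $ \F $ — to force $ D $ into one of the listed shapes. First I would fix the cyclic picture: $ A \preceq B $ together with $ a \neq c $ and $ b \neq d $ gives $ a <_x c <_x d <_x b $, so $ x,a,c,d,b $ occur in this cyclic order. In particular $ (x,a,d) $ is cyclically ordered, so $ C $ is a triangle, and its three arcs satisfy $ (a,d) = (a,c] \cup (c,d) \subseteq (a,b) $ with $ c \in (a,d) $, and $ (d,x) = (d,b] \cup (b,x) \subseteq (a,x) $ with $ b \in (d,x) $. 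Since $ C \notin \F $ and $ \F $ is maximal, some $ D \in \F $ crosses $ C $; I apply Lemma \ref{lem:cases_crossing} to the pair $ C, D $, which gives three alternatives according to which pair among the arcs $ (x,a), (a,d), (d,x) $ meets $ D $, the common endpoint of that pair not lying in $ D $.

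The next step is to discard two of these alternatives. If $ D $ meets $ (x,a) $ and $ (a,d) $ with $ a \notin D $, then since $ (a,d) \subseteq (a,b) $, the triangle $ D $ has a vertex in $ (x,a) $ and one in $ (a,b) $ with $ a \notin D $, so Lemma \ref{lem:cases_crossing} forces $ D $ to cross $ A $ — impossible. If $ D $ meets $ (x,a) $ (at $ u $, say) and $ (d,x) $ (at $ w $) with $ x \notin D $, I would split on $ w $: if $ w \in (b,x) $ then $ D $ already crosses $ A $; if $ w = b $ or $ w \in (d,b) $, then non-crossing with $ A $ forces the remaining vertex of $ D $ to be $ a $, or (when $ w = b $) into $ (x,a) $, and then, because $ (x,a) \cup \{a\} \subseteq (x,c) $ while $ w \in (d,x) $, non-crossing with $ B $ fails. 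Hence only the alternative ``$ D $ meets $ (a,d) $ and $ (d,x) $, with $ d \notin D $'' survives.

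In this surviving alternative, write $ v $ for the vertex of $ D $ in $ (a,d) $, $ w $ for the one in $ (d,x) $, and $ p $ for the third. Non-crossing with $ B = \{x,c,d\} $ forbids $ v \in (c,d) $ (that would put a vertex of $ D $ in $ (c,d) $ and one in $ (d,x) $ with $ d \notin D $), so $ v \in (a,c] $. If $ v \in (a,c) $, then $ v \in (x,c) $ and $ w \in (d,x) $, so non-crossing with $ B $ forces $ p = x \in D $; then non-crossing with $ A $ excludes $ w \in (b,x) $, leaving $ w \in (d,b] $, which is case (iii) of the statement. If $ v = c $, non-crossing with $ B $ (now $ c \in D $) confines $ p $ to $ \{x\} \cup (d,x) $; if $ p = x $, non-crossing with $ A $ again leaves $ w \in (d,b] $ and gives case (iii); if $ p \in (d,x) $, a last split on whether $ b \in \{p,w\} $ finishes it, since non-crossing with $ A $ bars any vertex of $ D $ from $ (b,x) $ when $ b \notin D $ — so either $ p,w \in (d,b) $ (case (i)), or $ b \in \{p,w\} $ and the other vertex lies in $ (d,b) $ (case (i)) or in $ (b,x) $ (case (ii)).

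No idea beyond maximality and the contrapositive of Lemma \ref{lem:cases_crossing} is needed; the content is a finite case analysis, and the main obstacle is the bookkeeping — chiefly the $ v = c $ branch above, where the free vertex $ p $ roams the long arc $ (d,x) $ and one must repeatedly apply non-crossing with both $ A $ and $ B $, each time selecting the correct alternative of Lemma \ref{lem:cases_crossing} and tracking which interval inclusions are strict, so as to drive every vertex of $ D $ into the intervals named in (i)–(iii).
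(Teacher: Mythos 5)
Your proposal is correct and takes essentially the same route as the paper: maximality yields a triangle $ D \in \F $ crossing $ C = \{x,a,d\} $, Lemma \ref{lem:cases_crossing} applied to $ C $ gives three configurations, the first two are eliminated via non-crossing with $ A $ resp.\ $ B $, and the surviving configuration is driven into cases (i)--(iii) by a finite analysis against $ A $ and $ B $ — your only differences are organizational (you split the last case by the position of $ v $ and the free vertex $ p $ rather than by $ w $, and your elimination of the second configuration is longer than the paper's one-line argument using $ \overline{xc} $ and $ \overline{xd} $ crossing $ \overline{uw} $). One small point of rigor: where you write that Lemma \ref{lem:cases_crossing} ``forces'' a crossing you are invoking its (true but unstated) converse, which — as the paper does explicitly — should be justified by the chord criterion plus the observation that the single third vertex of $ D $ cannot simultaneously equal both excluded endpoints.
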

\begin{proof}
 As $ \F $ is maximal and $ C \notin \F $, there is a triangle $ D = \{u,v,w\} \in \F $  that crosses $ C $. Otherwise $ C $ could be added to $ \F $ and $ \F $ would not be maximal. By Lemma \ref{lem:cases_crossing} one of the following cases must apply: \\
 
 Case 1: $ u \in (x,a), v \in (a,d) $ and $ w \neq a $. \\
 But then $ \overline{xa} $ and $ \overline{ab} $ cross $ \overline{uv} $ with $ u,v \notin A $ and $ a \notin D $. This means that $ A $ crosses $ D $, unless $ x \in D $  and $ b \in D $. But then we would have $ w = x $ and $ w = b $. Contradiction.\\
 
 Case 2: $ u \in (x,a), w \in (d,x) $ and $ v \neq x $. \\
 Then $ \overline{xc} $ and $ \overline{xd} $ cross $ \overline{uw} $ with $ u,w \notin B $ and $ x \notin D $. Hence $ B $ and $ D $ cross unless $ v = c $ and $ v = d $. Contradiction. \\
 
 Case 3: $ v \in (a,d), w \in (d,x) $ and $ u \neq d $. \\
 First assume $ w \in (b,x) $. Then $ \overline{ab} $ and $ \overline{bx} $ cross $ \overline{vw} $ with $ v,w \notin A $. Therefore we have $ u = a $ or $ u = b $ and also $ u = b $ or $ u = x $. Thus $ u = b $.\\
 However in this case $ \overline{dx} $ crosses $ \overline{vw} $ with $ w \notin B $ and $ d,x \notin D $. Since $ B $ and $ D $ do not cross, this implies $ v \in B $, i.E. $ v = c $. \\
 In conclusion, $ D = \{c,b,w\} $ with $ w \in (b,x) $ as in (ii).\\
 
 Next, assume $ w = b $. Then $ \overline{dx} $ crosses $ \overline{vw} $ with $ w \notin B $ and $ d \notin D $. It follows that $ v = c $ or $ u = x $. \\
 Let $ u = x $. Then, if $ v \in (a,c] $ we have $ D = \{x,v,b\} $ and we are in (iii). \\
 On the other hand if $ v \in (c,d) $, then $ \overline{cd} $ crosses $ \overline{vw} $ with $ c,d \notin D $ and $ v,w \notin B $. Contradiction. \\
 We can now assume $ u \neq x $ and hence $ v = c $. If $ u \in (x,d) $, then $ \overline{uw} $ crosses $ \overline{xd} $ with $ d,x \notin D $ and $ u,w \notin B $. Contradiction. It follows that $ D = \{c,b,u\} $ with $ u \in (d,x) $. Depending on $ u $ this is (i) or (ii). \\
 
 Finally, assume $ w \in (d,b) $. Then $ \overline{xd} $ crosses $ \overline{vw} $ with $ w \notin B $ and $ d \notin D $. Then $ u = x $ or $ v = c $. \\
 Let $ u \neq x $ and therefore $ v = c $. Now if $ u \in (b,a) $, then $ \overline{ab} $ crosses $ \overline{vu} $ with $ a,b \notin D $ and $ v,u \notin A $. Contradiction. 
 If $ u \in (x,d) $, then $ \overline{xd} $ crosses $ \overline{uw} $ with $ d,x \notin D $ and $ u,w \notin B $, which is also a contradiction. \\
 Hence $ u \in (d,b] $. In this case we have $ D = \{c,w,u\} $ with $ w, u \in (d,b] $, which is (i). \\
 Now we can assume $ u = x $. Now, if $ v \in (c,d) $, then $ \overline{cd} $ crosses $ \overline{vw} $ with $ c,d \notin D $ and $ v,w \notin B $. Contradiction. Thus $ v \in (a,c] $. But then we are in (iii).

\end{proof}

\begin{rem}
 By symmetry, Lemma \ref{lem:three_cases} implies the following: If $ C := \{x,c,b\} \notin \F $ in the situation of the lemma, then there is a $ D \in \F $ crossing $ C $ with
 \begin{itemize}
  \item[(i)] $ D = \{y,z,d\} $ with $ y,z \in [a,c) $,
  \item[(ii)] $ D = \{z,a,d\} $ with $ z \in (x,a) $ or
  \item[(iii)] $ D = \{x,y,z\} $ with $ y \in [a,c) $ and $ z \in [d,b) $.
 \end{itemize}

\end{rem}

\begin{lem}\label{lem:empty_interval}

 Let $ \F $ be a maximal weakly separated family of triangles in $ [n] $ and $ x \in [n] $. Let $ A, B \in \F_x $ with $ A \preceq B $ and $ (A,B)_x = \emptyset $. Then $ |A \cap B| \geq 2 $. If we write $ A = \{x,a,b\} $ and $ B = \{x,c,d\} $ with $ a \leq_x b $ and $ c \leq_x d $, this implies $ b = c $ or $ d = b $.
\end{lem}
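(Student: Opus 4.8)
The plan is to argue by contradiction. Since $x \in A \cap B$, the assertion $|A \cap B| \ge 2$ is equivalent to $\{a,b\} \cap \{c,d\} \neq \emptyset$; and because $A \preceq B$ together with $c <_x d$ forces $a \le_x c <_x d \le_x b$, this is in turn equivalent to $a = c$ or $b = d$ (the coincidences $a = d$ and $b = c$ being impossible along this chain of inequalities). So suppose, for contradiction, that $a <_x c$ and $d <_x b$, both strict. Note that this is exactly the hypothesis needed to apply Lemma \ref{lem:three_cases} and the Remark following it.

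I would begin by ruling out the two ``obvious'' triangles that ought to sit strictly between $A$ and $B$. Set $C_1 := \{x,a,d\}$ and $C_2 := \{x,c,b\}$. Checking the defining inequalities of $\preceq$ directly gives $A \prec C_i \prec B$ for $i=1,2$ — the strictness of $a <_x c$ and $d <_x b$ is precisely what yields $C_i \neq A$ and $C_i \neq B$ — so each $C_i$ would lie in $(A,B)_x$ were it a member of $\F$. Hence $C_1, C_2 \notin \F$. As $\F$ is maximal, $C_1$ crosses some $D_1 \in \F$, which by Lemma \ref{lem:three_cases} takes one of the forms (i), (ii), (iii) listed there; similarly $C_2$ crosses some $D_2 \in \F$, which by the Remark after Lemma \ref{lem:three_cases} takes one of the mirrored forms, say (i$'$), (ii$'$), (iii$'$).

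Next I would dispose of the forms through $x$. If $D_1 = \{x, y_1, z_1\}$ is of form (iii), with $y_1 \in (a,c]$ and $z_1 \in (d,b]$, then (using $y_1 \le_x c <_x d <_x z_1$) the defining inequalities of $\preceq$ give $A \prec D_1 \prec B$, i.e.\ $D_1 \in (A,B)_x$, a contradiction; the same computation excludes form (iii$'$) for $D_2$. So we may assume $D_1$ is of form (i) or (ii) and $D_2$ is of form (i$'$) or (ii$'$). In all four of these cases $D_1$ equals $c$ together with two distinct points of the arc $(d,x)$, and $D_2$ equals $d$ together with two distinct points of the arc $(x,c)$; in particular $D_1 \cap D_2 = \emptyset$. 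Writing the two points of $D_1 \setminus \{c\}$ as $R <_x R'$ and those of $D_2 \setminus \{d\}$ as $L <_x L'$, the cyclic order of these six points is $(L, L', c, d, R, R')$, so $(c, d, R)$ and $(d, R, L)$ are both cyclically ordered, with $c, R \in D_1 \setminus D_2$ and $d, L \in D_2 \setminus D_1$. Hence $D_1$ and $D_2$ cross, contradicting that $\F$ is weakly separated; this last contradiction proves the lemma.

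The step I expect to be the main obstacle — and the reason one needs both auxiliary triangles $C_1$ and $C_2$ rather than just one — is that a single $D_1$ of form (i) or (ii) gives no contradiction on its own: such a $D_1$ is disjoint from, and weakly separated from, both $A$ and $B$. One must therefore bring in the mirror triangle $D_2$ attached to $C_2$ and exhibit a crossing of $D_1$ with $D_2$; what makes this painless is that all four residual case combinations collapse to the single cyclic picture $(L, L', c, d, R, R')$. The remaining verifications — that $A \prec C_i \prec B$, the form-(iii) computation, and the disjointness and position claims above — are routine once one unwinds the orders $<_x$ and $\preceq$.
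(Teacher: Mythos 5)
Your proof is correct and follows essentially the same route as the paper's: both introduce the two auxiliary triangles $\{x,a,d\}$ and $\{x,c,b\}$, exclude them from $\F$ via $(A,B)_x=\emptyset$, discard case (iii) of Lemma \ref{lem:three_cases} (and of its mirror) for the same reason, and derive the contradiction by crossing the two resulting triangles $D_1=\{c\}\cup\{R,R'\}$ and $D_2=\{d\}\cup\{L,L'\}$. You also correctly read the conclusion as $a=c$ or $d=b$ (the statement's ``$b=c$'' is evidently a typo), matching how the lemma is invoked later in the paper.
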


\begin{proof}
 Assume $ b \neq c $ and $ d \neq b $. In particular $ C := \{x,a,d\}, C^\prime := \{x,c,b\} \notin \{A, B\} $. If $ C \in \F $, then we would have $ C \in (A,B)_x $. Thus $ C \notin \F $. By Lemma \ref{lem:three_cases} we have $ D = \{c,y,z\} \in \F $ with $ y,z \in (d,b] $ or $ D = \{c,b,z\} \in \F $ with $ z \in (b,x) $ (the case (iii) would contradict $ (A,B)_x = \emptyset $). In either case $ D = \{c,y,z\} $ with $ y,z \in (d,x) $.\\
 Similarily, if $ C^\prime \in \F $, we would have $ C^\prime \in (A,B)_x $. By the last remark, there then must be $ D^\prime = \{v,w,d\} \in \F $ with $ v,w \in [a,c) $ or $ D^\prime = \{v,a,d\} \in \F $ with $ v \in (x,a) $. Again, (iii) is ruled out by $ (A,B)_x = \emptyset $. In both possible cases $ D^\prime = \{v,w,d\} $ with $ v,w \in (x,c) $.\\
 Then $ \overline{cy} $ crosses $ \overline{vd} $ with $ c,y \notin D^\prime $ and $ v,d \notin D $. Hence $ D $ crosses $ D^\prime $ in contradiction to both being in $ \F $. Our assumption must be wrong, accordingly $ b = c $ or $ d = b $.
\end{proof}

\begin{lem} \label{lem:new_triangle}
 Let $ \F $ be a maximal weakly separated family of triangles in $ [n] $, $ x \in [n] $ and $ A = \{x,a,b\}, B = \{x,c,d\} \in \F $ with $ a <_x c <_x d <_x b $. Then there is a point $ y \in (a,c] \cup [d,b) $ such that $ \{x,a,y\} $ and $ \{x,y,b\} $ are both in $ \F $.

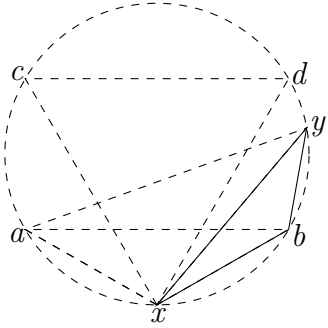
\begin{figure}[H] \label{fig:triangle_lemma}
\centering
\begin{tikzpicture}
\coordinate (center) at (0,0);
\coordinate (x) at ($(center) + (270:2cm)$);
\coordinate (a) at ($(center) + (210:2cm)$);
\coordinate (b) at ($(center) + (330:2cm)$);
\coordinate (c) at ($(center) + (150:2cm)$);
\coordinate (d) at ($(center) + (30:2cm)$);
\coordinate (y) at ($(center) + (10:2cm)$);
\draw[dashed] (center) circle(2cm);
\draw[dashed] (x) node[shift={(270:4pt)}]{x} -- (a) node[shift={(210:4pt)}]{a} 
	-- (b) node[shift={(330:4pt)}]{b} --cycle;
\draw[dashed] (x) -- (c) node[shift={(150:4pt)}]{c}
	-- (d) node[shift={(30:4pt)}]{d} --cycle;
\draw (x) -- (b)
	-- (y) node[shift={(10:4pt)}]{y} --cycle;
\draw[dashed] (x) -- (a) 
	-- (y) --cycle;
\end{tikzpicture}
\caption{Illustration of the Lemma}
\end{figure}

\end{lem}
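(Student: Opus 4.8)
The plan is to induct on the number of triangles strictly between $A$ and $B$, i.e.\ on $|(A,B)_x|$, and to use Lemma~\ref{lem:three_cases} together with Lemma~\ref{lem:empty_interval} as the engine. For the base case, suppose $(A,B)_x=\emptyset$. Then Lemma~\ref{lem:empty_interval} applies and forces $b=c$ or $d=b$; but we are assuming $a<_x c<_x d<_x b$, so both equalities fail, and the base case is vacuous (there is no such pair $A,B$ with empty interval). Hence the real content starts once $(A,B)_x$ is nonempty, and I would phrase the induction so that the inductive step always produces a strictly smaller interval to which the hypothesis applies.

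For the inductive step, consider $C:=\{x,a,d\}$ and $C':=\{x,c,b\}$. If $C\in\F$, then $A\preceq C\preceq B$ with $C$ sharing the endpoint $a$ with $A$; and since $c<_x d$ we have $C\neq A$, while $d<_x b$ gives $C\neq B$, so $C\in(A,B)_x$ and in particular $|(A,C)_x|<|(A,B)_x|$. Now I distinguish whether $C$ is "minimal above $A$": if already $\{x,a,y\}\in\F$ for some $y\in(a,b)$ it is tempting to recurse, but the cleanest route is to apply the induction hypothesis to the pair $(C,B)$ (which has $a<_x c$? — no, $C=\{x,a,d\}$, so we'd need $a<_x$ something — here one should instead recurse on $(A,C)$ with $C=\{x,a,d\}$, $a=a$, so the four-distinct-points hypothesis of the Lemma fails and we must peel off differently). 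So the argument should really be: among all triangles in $\F_x$ of the form $\{x,a,?\}$ with $? \in (a,b]$, pick the one $A'=\{x,a,a'\}$ with $a'$ minimal under $<_x$; symmetrically pick $B'=\{x,b',b\}$ with $b'$ maximal. One then shows $A\preceq A'\preceq B'\preceq B$ and that if $a'=b$ or $b'=a$ we are immediately done (that triangle is the desired $\{x,a,y\}$ or $\{x,y,b\}$, and the companion triangle is $A$ or $B$ itself after relabelling — one must double-check this edge case), and otherwise $a<_x b'$ and $a'<_x b$ but now the "gap" $(A',B')_x$ is strictly smaller, so induction finishes — provided $A'\neq B'$, i.e.\ provided we don't land on a single triangle $\{x,a',b'\}$ that both attains the minimal first coordinate $a$... which can't happen since its first coordinate would be $a$ not $a'$.

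The main obstacle — and the step I would spend the most care on — is ruling out the "no such $C$ or $C'$ in $\F$" scenario and showing it cannot persist. If neither $C=\{x,a,d\}$ nor $C'=\{x,c,b\}$ lies in $\F$, Lemma~\ref{lem:three_cases} gives a $D\in\F$ crossing $C$ of type (i), (ii), or (iii), and the remark after it gives a $D'\in\F$ crossing $C'$ of the mirrored types. Exactly as in the proof of Lemma~\ref{lem:empty_interval}, in the non-(iii) cases $D$ is trapped in the arc $(d,x)$ and $D'$ in the arc $(x,c)$, and then $\overline{cy}$ crosses $\overline{vd}$ gives $D,D'$ crossing, a contradiction. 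The genuinely new case here, absent from Lemma~\ref{lem:empty_interval}, is case~(iii): $D=\{x,y,z\}$ with $y\in(a,c]$, $z\in[d,b)$. But such a $D$ lies in $(A,B)_x$ (one checks $A\prec D\prec B$ from the position of $y,z$), so it shrinks the interval; I would then recurse on $(A,D)$ and $(D,B)$, each of strictly smaller interval size, and stitch the two resulting pairs of triangles together — the triangle $\{x,a,y_1\}$ from the left recursion and $\{x,y_2,b\}$ from the right — noting that if the left recursion already yields some $\{x,a,y\}$ and $\{x,y,b\}$ we are done outright, and otherwise the glued point works because $D$ itself supplies the connecting triangle at $y$. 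Handling case~(iii) symmetrically for $D'$ and checking that the recursion parameter genuinely decreases (it does: $D\in(A,B)_x$ is a proper element, so $|(A,D)_x|,|(D,B)_x|<|(A,B)_x|$) completes the proof.
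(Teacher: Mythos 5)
Your overall skeleton --- descent on $|(A,B)_x|$, with Lemma \ref{lem:empty_interval} making the base case vacuous, a Lemma-\ref{lem:empty_interval}-style crossing contradiction when both witnesses from Lemma \ref{lem:three_cases} are of non-(iii) type, and recursion on the pair $(A,D)$ when a type-(iii) witness $D\in(A,B)_x$ appears --- is sound and matches the paper's implicit structure. (Your ``stitching'' in the type-(iii) case is unnecessary: recursing on $(A,D)$ alone already yields $y\in(a,y_D]\cup[z_D,b)\subseteq(a,c]\cup[d,b)$ with $\{x,a,y\},\{x,y,b\}\in\F$.) The genuine gap is the case where this recursion gets stuck, namely when every element of $(A,B)_x$ shares an endpoint with $A$. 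This is exactly where you run into trouble: when $C=\{x,a,d\}\in\F$ or $C'=\{x,c,b\}\in\F$, and also in the type-(iii) edge case $z=b$ (Lemma \ref{lem:three_cases}(iii) gives $z\in(d,b]$, not $[d,b)$, so $D=\{x,y,b\}$ may share the endpoint $b$ with $A$). In all of these, no pair satisfying the lemma's four-distinct-points hypothesis with $A$ in the first slot and a strictly smaller gap is available; and a recursive call with a different first-slot triangle is useless, because the lemma's conclusion is tied to the endpoints of that triangle and would produce $\{x,a,y\},\{x,y,d\}$ or $\{x,y,y_1\},\{x,y_1,b\}$ rather than the required $\{x,a,y\},\{x,y,b\}$.

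Your attempted repair --- choosing $A'=\{x,a,a'\}$ with $a'$ minimal and $B'=\{x,b',b\}$ with $b'$ maximal and asserting $A\preceq A'\preceq B'\preceq B$ --- fails: with the paper's definition of $\preceq$, the relation $A'\preceq B'$ requires $b\leq_x a'$, which together with $a'\leq_x b$ forces $a'=b$ and hence $A'=A$; so the chain never holds in a nontrivial situation, and even if it did, the inductive hypothesis applied to $(A',B')$ would say nothing about triangles containing both $a$ and $b$. What is needed at the stuck point, and what the paper supplies, is a direct argument: take $f$ maximal with $\{x,a,f\}\in\F$ and $f<_x b$ (such an $f$ exists in the stuck case, e.g.\ $f=d$ when $C\in\F$, or $f=y$ when $D=\{x,y,b\}$ after swapping roles by symmetry), and prove outright that $\{x,f,b\}\in\F$ by showing that any triangle crossing it would cross $A$, $B$ or $\{x,a,f\}$, or would contradict the maximality of $f$. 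That claim is the technical core of the proof --- a full three-case crossing analysis in the paper --- and it has no counterpart in your plan; without it the induction does not close.
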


\begin{proof}
 Clearly we have $ A \prec B $. We also know $ (A,B)_x \neq \emptyset $, since otherwise Lemma \ref{lem:empty_interval} would imply $ a = c $ or $ d = b $, contradicting our assumption. Let $ C = \{x,e,f\} \in (A,B)_x $ with $ e <_x f $. If $ a \neq e $ and $ f \neq b $, then we can replace $ B $ with $ C $. Thus we can assume that either $ a = e $ or $ f = b $ for all such triangles $ C $. By symmetry, it is enough to only consider the case $ a = e $. Additionally we can assume that $ f $ is maximal, i.e. there is no triangle $ \{x,a,f^\prime\} \in \F $ with $ f <_x f^\prime <_x b $.
 \begin{claim}
  $ T := \{x,f,b\} \in \F $.
 \end{claim}
 Assume that $ T \notin \F $. Then there is a triangle $ D = \{u,v,w\} \in \F $ crossing $ T $, since $ \F $ is maximal weakly separated. Then one of the following cases must apply: \\
 
 Case 1: $ u \in (x,f), v \in (f,b) $ and $ w \neq f $.\\
 Then $ \overline{uv} $ crosses $ \overline{xf} $ with $ v \notin C $ and $ f \notin D $. If $ u \notin C $ and $ x \notin D $, then $ C $ would cross $ D $ which cannot be the case. Hence we have $ u = a $ or $ w = x $. \\
 Now assume $ w \neq x $ and thus $ u = a $. Then $ D $ and $ B $ cross in $ \overline{uv} $ and $ \overline{xc} $, except if $ w = c $. But if that is the case, then $ D $ and $ C $ cross in $ \overline{wv} $ and $ \overline{xf} $. \\
 It follows that $ w = x $. Now, if $ u \in (x,a) $, then $ D $ crosses $ A $ in $ \overline{uv} $ and $ \overline{ab} $. Contradiction. \\
 If $ u \in (a,f) $, then $ D $ crosses $ C $ in $ \overline{uv} $ and $ \overline{af} $. Contradiction. \\
 Therefore $ u = a $. But then $ D = \{x,a,v\} \in \F $ with $ f <_x v <_x b $, which contradicts our assumption that $ f $ is maximal. \\
 
 Case 2: $ u \in (x,f) , w \in (b,x) $ and $ v \neq x $. \\
 Then $ D $ crosses $ A $ in $ \overline{uw} $ and $ \overline{xb} $, except if $ u = a $ or $ v = b $. $ D $ also crosses $ C $ in $ \overline{uw} $ and $ \overline{xf} $, except $ u = a $ or $ v = f $. Since $ v = b $ and $ v = f $ cannot both be true, we have $ u = a $. \\
 But then $ D $ crosses $ B $ in $ \overline{uw} $ and $ \overline{xc} $ as well as $ \overline{xd} $, except if $ v = c $ and $ v = d $. Since $ c \neq d $, this is a contadiction. \\
 
 Case 3: $ v \in (f,b), w \in (b,x) $ and $ u \neq b $. \\
 Then $ D $ crosses $ A $ in $ \overline{vw} $ and $ \overline{xb} $ as well as $ \overline{ab} $, except if $ u = x $ and $ u = a $. Contradiction. \\
 
 Since none of the cases apply, $ D $ cannot exist. But then the claim is true and $ \{x,f,b\} \in \F $. By setting $ y := f $ we have $ \{x,a,y\}, \{x,y,b\} \in \F $.

\end{proof}

\begin{defi}
 Let $ G(\F_x) $ be the graph where the vertices are the elements of $ [n] $ that appear in $ \F_x $ apart from $ x $ and where there is an edge between $ a $ and $ b $ for every triangle $ \{x,a,b\} \in \F_x $.
\end{defi}

\begin{defi}
    Let $ G = (V,E) $ be a graph equipped with a cyclical order on its vertices. We call $ G $ a {\em triangulation of a polygon}, if
    \begin{itemize}
        \item[(i)] neighbouring vertices in the order are connected by an edge,
        \item[(ii)] no two edges cross each other and
        \item[(iii)] all inner regions of the graph are triangles.
    \end{itemize}
\end{defi}

It is easy to see that triangulations with $ V = [n] $ are exactly the maximal weakly separated families of $ 2 $-subsets of $ [n] $.

\begin{theorem} \label{thm:triangulation}
Let $ \F $ be a maximal family of weakly separated triangles in $ [n] $. Let $ x \in [n] $. Let $ v_1,\dots,v_r $ be those vertices in $ G(\F_x) $ that have degree at least $ 2 $. Then the full subgraph in $ v_1,\dots,v_r $ is a triangulation of an $ r $-gon. Moreover, all other vertices of $ G(\F_x) $ have degree $ 1 $ and their edge connects to one of the $ v_i $.
\end{theorem}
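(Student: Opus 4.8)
The plan is to treat $G(\F_x)$ as a chord diagram on the points of $[n]\setminus\{x\}$ placed in the circular order $<_x$, and to pin down its shape from weak separation and maximality alone. The first step is immediate: if two edges $\{x,a,b\},\{x,c,d\}\in\F_x$ had crossing chords, then $\{a,b\}\cap\{c,d\}=\emptyset$ and, by the remark relating triangle‑crossing to crossing lines, the triangles $\{x,a,b\}$ and $\{x,c,d\}$ would cross, contradicting that $\F$ is weakly separated; so the edges of $G(\F_x)$ are pairwise non‑crossing. Since the vertices lie in convex position, every bounded region of the diagram is then a polygon whose corners $r_0<_x r_1<_x\dots<_x r_s$ appear in $<_x$‑order, bounded by the ``short'' edges $\{x,r_i,r_{i+1}\}$ together with the single ``long'' edge $\{x,r_0,r_s\}$, all lying in $\F$. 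I will also use two facts that come for free from the frozen consecutive triples, which lie in every maximal family: the triangles $\{x,x-1,x+1\}$, $\{x,x+1,x+2\}$, $\{x-2,x-1,x\}$ belong to $\F$, so the outer chord $e_0=\{x+1,x-1\}$ joining the two $<_x$‑extreme vertices is always an edge, those two extreme vertices have degree $\geq 2$, and (for $n\geq 4$) $G(\F_x)$ has no isolated edge.

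The heart of the matter, and the step I expect to be the main obstacle, is to show every bounded region is a triangle, i.e.\ $s=2$ above. Suppose $s\geq 3$ for some region. The chord $\overline{r_0r_2}$ lies in the interior of that region, so $\{x,r_0,r_2\}$ is not an edge (it would subdivide the region) and $\overline{r_0r_2}$ crosses no edge of $G(\F_x)$; hence $\{x,r_0,r_2\}\notin\F$ and it crosses no triangle of $\F$ containing $x$. By maximality it crosses some $D\in\F$, and by the previous sentence $x\notin D$. I apply Lemma~\ref{lem:three_cases} with $A=\{x,r_0,r_s\}$ and $B=\{x,r_1,r_2\}$: indeed $A\preceq B$ with $r_0<_x r_1<_x r_2<_x r_s$, the forbidden triangle is exactly $C=\{x,r_0,r_2\}$, so $D$ has one of the three shapes listed there. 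Shape (iii) is a triangle $\{x,y,z\}$ containing $x$, which was already excluded. In shape (i), $D=\{r_1,y,z\}$ with $r_2<_x y<_x z\leq_x r_s$, and a short check against the short edges $\{x,r_j,r_{j+1}\}$ shows $D$ crosses one of them; in shape (ii), $D=\{r_1,r_s,z\}$ with $z>_x r_s$, and $D$ crosses the short edge $\{x,r_{s-1},r_s\}$ (using $s\geq 3$, so that $r_1<_x r_{s-1}$). Each case contradicts that $\F$ is weakly separated, so $s=2$.

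It remains to assemble the global picture, which I would do as follows. As $e_0$ is an edge enclosing every other chord, the bounded (hence triangular) regions tile the disk cut off by $e_0$, and their union is a triangulated polygon one of whose sides is $e_0=\{v_1,v_r\}$. Then: (a) two $<_x$‑consecutive degree‑$\geq 2$ vertices $v_i,v_{i+1}$ are adjacent, since between them lie only degree‑$1$ vertices, each joined to $v_i$ or to $v_{i+1}$, so $\overline{v_iv_{i+1}}$ crosses no edge and hence no triangle of $\F$ through $x$, and maximality then forces $\{x,v_i,v_{i+1}\}\in\F$ (again via Lemma~\ref{lem:three_cases}, now with $e_0$ and a pendant or diagonal edge in the roles of $A,B$, and using also the symmetric companion of that lemma from the remark); together with the triangle property of the regions this makes the full subgraph on $v_1,\dots,v_r$ a triangulation of an $r$‑gon. (b) Conversely every corner of that polygon has degree $\geq 2$. (c) Any remaining vertex $v$ has degree exactly $1$ — a second edge at $v$ would, by the argument of (a) together with Lemma~\ref{lem:new_triangle}, put $v$ on a bounded region, hence on the polygon — and its unique neighbour has degree $\geq 2$, since otherwise $\{x,v,w\}$ would be an isolated edge, which was excluded. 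The only delicate point here is bookkeeping the degenerate cases (small $n$, and $r=2$, where the ``polygon'' degenerates to the single edge $e_0$ and there are no bounded regions at all), which must be checked by hand; all the real content is in the triangle claim of the previous paragraph.
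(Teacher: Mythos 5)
Your central step --- that a face of the chord diagram of $G(\F_x)$ whose boundary consists entirely of graph edges must be a triangle, via Lemma~\ref{lem:three_cases} applied to $A=\{x,r_0,r_s\}$, $B=\{x,r_1,r_2\}$ --- is sound (shapes (i) and (ii) do cross one of the boundary triangles $\{x,r_j,r_{j+1}\}$, with a little care when $y$ or $z$ is itself a corner), and it is a genuinely different route from the paper's induction. But that step alone does not give the theorem, and your assembly has two real gaps. First, the claim that $G(\F_x)$ has no isolated edge does \emph{not} come for free from the frozen consecutive triples: those only give the edges $\{x-2,x-1\}$, $\{x-1,x+1\}$, $\{x+1,x+2\}$, hence $\deg(x\pm1)\geq 2$; they say nothing about an edge $\{a,b\}$ with both endpoints of degree $1$ elsewhere on the circle. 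Excluding such an edge genuinely requires maximality --- in the paper it is done via Lemma~\ref{lem:new_triangle} applied to $\{x,v_l,v_{l+1}\}$ and $\{x,Q_1,Q_2\}$, which already presupposes that consecutive triangulation points are adjacent. Your step (c) leans on exactly this exclusion, so it is unproved.

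Second, and more seriously, step (a). Its premise that every degree-$1$ vertex in $(v_i,v_{i+1})$ is joined to $v_i$ or $v_{i+1}$ is precisely Lemma~\ref{lem:leaf_locations}, which the paper deduces \emph{from} Theorem~\ref{thm:triangulation}; at this stage of your argument nothing forbids such a vertex being joined to a far-away vertex (there is as yet no edge that its chord would have to cross) or to another degree-$1$ vertex. Moreover, even granting that $\{x,v_i,v_{i+1}\}$ crosses no triangle of $\F$ through $x$, you must still rule out crossings by triangles of $\F$ not containing $x$, and your appeal to Lemma~\ref{lem:three_cases} does not do this: to make $\{x,v_i,v_{i+1}\}$ the missing triangle $C$ of that lemma you need $A=\{x,v_i,b\}\in\F$ with $v_{i+1}<_x b$ and $B=\{x,c,v_{i+1}\}\in\F$ with $c\in(v_i,v_{i+1})$ (or the mirror pair from the remark), and such edges need not exist --- for instance when the gap $(v_i,v_{i+1})$ contains no vertices and the other edges at $v_i$ and $v_{i+1}$ point away from it. So the adjacency of consecutive degree-$\geq 2$ vertices, the backbone of your tiling of the region cut off by $e_0$, is not established; this is exactly the work performed in the paper by the induction built on Lemma~\ref{lem:new_triangle}, and your proof needs a replacement for it (or simply that lemma, used as in the paper).
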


\begin{ex}

The figure below shows an example of how $ G(\F_x) $ could look according to the Theorem.

\begin{figure}[H] \label{fig:g}
\centering
\begin{tikzpicture}
\coordinate (center) at (0,0);
\coordinate (xp) at ($(center) + (255:2cm)$);
\coordinate (xm) at ($(center) + (285:2cm)$);
\coordinate (xpp) at ($(center) + (225:2cm)$);
\coordinate (xmm) at ($(center) + (315:2cm)$);
\coordinate (a) at ($(center) + (150:2cm)$);
\coordinate (b) at ($(center) + (80:2cm)$);
\coordinate (c) at ($(center) + (115:2cm)$);
\coordinate (d) at ($(center) + (20:2cm)$);
\coordinate (l2) at ($(center) + (205:2cm)$);
\coordinate (l3) at ($(center) + (185:2cm)$);
\coordinate (l4) at ($(center) + (50:2cm)$);
\coordinate (l5) at ($(center) + (350:2cm)$);

\draw[dashed] (center) circle(2cm);
\draw (xm) node[shift={(285:5pt)}]{$v_7$} -- (xp) node[shift={(255:5pt)}]{$v_1$};
\draw (xp) -- (xpp) node[shift={(225:5pt)}]{$l_1$};
\draw (xm) -- (xmm) node[shift={(315:5pt)}]{$v_6$};
\draw (xp) -- (a) node[shift={(150:5pt)}]{$v_2$};
\draw (xm) -- (a);
\draw (xm) -- (b) node[shift={(80:5pt)}]{$v_4$};
\draw (a) -- (b);
\draw (b) -- (xmm);
\draw (a) -- (c) node[shift={(115:5pt)}]{$v_3$};
\draw (c) -- (b);
\draw (b) -- (d) node[shift={(20:5pt)}]{$v_5$};
\draw (d) -- (xmm);
\draw (a) -- (l2) node[shift={(205:5pt)}]{$l_2$};
\draw (a) -- (l3) node[shift={(185:5pt)}]{$l_3$};
\draw (d) -- (l4) node[shift={(50:5pt)}]{$l_4$};
\draw (d) -- (l5) node[shift={(350:5pt)}]{$l_5$};
\end{tikzpicture}
\caption{Possible $ G(\F_x) $}
\end{figure}

\end{ex}

\begin{defi}
 We call the vertices $ v_i $ from Theorem \ref{thm:triangulation} the triangulation points of $ \F_x $. The vertices $ v $ in $ G(\F_x) $ with $ deg(v) = 1 $ that are connected to $ v_i $ by their edge are called leaves in $ v_i $.
\end{defi}

\begin{rem}
The continuous triangles $ \{x-2,x-1,x\}, \{x-1,x,x+1\}, \{x,x+1,x+2\} $ do not cross any triangle and are thus contained in all maximal weakly separated families. And since they contain $ x $, they are also elements of $ \F_x $. Therefore $ x-1 $ and $ x+1 $ are both triangulation points,  $ v_1 = x+1 $ and $ v_r = x-1 $. $ x+2 $ and $ x-2 $ will likewise always be vertices in $ G(\F_x) $, but they can either be leaves or triangulation points.
\end{rem}

\begin{proof}[Proof of Theorem \ref{thm:triangulation}]
Let $ r $ be the number of triangulation points in $ G(\F_x) $, i.e. the number of vertices of order $ \geq 2 $. We have $ r \geq 2 $ since $ x-1 $ and $ x+1 $ both have order $ \geq 2 $. We will show that for every $ 2 \leq k \leq r $ there are $ k $ triangulation points including $ x+1 $ and $ x-1 $ such that the complete subgraph in these vertices is a triangulation of an $ k $-gon. The first part of the Theorem follows with $ k = r $. Induction by $ k $: \\

$ k = 2 $: A triangulation of a $ 2 $-gon is just $ 2 $ points with an edge in between. Choose $ v_1 = x+1 $ and $ v_r = x-1 $. There is an edge between them since $ \{x-1,x,x+1\} \in \F $. \\

$ k \mapsto k + 1 , k < r $: Let $ T_k = \{v_{i_1},\dots,v_{i_k}\} $ be the set of triangulation points from the previous step. We can assume that this set is ordered with respect to $ <_x $. Let $ v_j $ be any triangulation point not in $ T_k $. Choose the largest $ 1 \leq l \leq k $ with $ v_{i_l} <_x v_j $. This exists, since $ v_{i_1} = x+1 \in T_k $ and $ v_{i_1} <_x v_j $. We also know $ l < k $, since $ v_{i_k} = x-1 $  and $ v_j <_x x-1 $. Thus $ v_j \in (v_{i_l},v_{i_{l+1}}) $. \\
Furthermore, $ v_j $ has degree at least $ 2 $ in $ G(\F_x) $, so there are points $ P, Q \in [n] $ with $ \{x,v_j,P\}, \{x,v_j,Q\} \in \F $. If $ \{P, Q\} = \{v_{i_l},v_{i_{l+1}}\} $ then the complete subgraph in $ T_k \cup \{v_j\} $ is a triangulation of a $ k+1 $-gon. \\
Otherwise assume, without loss of generality, $ P \notin \{v_{i_l},v_{i_{l+1}}\} $.  In this situation we have $ P \in (v_{i_l},v_{i_{l+1}}) $ because otherwise $ \{x,v_j,P\} \in \F $ would cross $ \{x,v_{i_l},v_{i_{l+1}}\} \in \F $. \\
It follows that we are in the situation of Lemma \ref{lem:new_triangle} with $ A = \{x,v_{i_l},v_{i_{l+1}}\} $ and $ B = \{x,v_j,P\} $. Therefore $y \in (v_{i_l},v_{i_{l+1}}) $ exists with $ \{x,v_{i_l},y\}, \{x,y,v_{i_{l+1}}\} \in \F $. In particular $ y $ has degree $ \geq 2 $ in $ \F_x $ and is thus a triangulation point. Then the complete subgraph in $ T_k \cup \{y\} $ is a triangulation of a $ k+1 $-gon. \\

It remains to show that every other vertex has degree $ 1 $ and is adjacent to a triangulation point. Let $ Q_1 $ be a vertex with degree $ \leq 1 $. $ Q_1 $ must have at least one edge by definition of $ G(\F_x) $ and is thus a leaf. Let $ Q_2 $ be its sole neighbour. Assume that $ Q_2 $ is also a leaf. Choose the largest $ 1 \leq l \leq r $ with $ v_l <_x Q_1 $. The existence of $ l $ and $ l < r $ follow with the same argument as before. Then we have $ Q_1, Q_2 \in (v_l,v_{l+1}) $, because otherwise $ \{x,Q_1,Q_2\} \in \F $ would cross $ \{x,v_l,v_{l+1}\} \in \F $. We can apply Lemma \ref{lem:new_triangle}, which yields a $ y \in (v_l,v_{l+1}) $ with $ \{x,v_l,y\}, \{x,y,v_{l+1}\} \in \F $. Hence $ y $ is a triangulation point. But $ y \in (v_l,v_{l+1}) $ implies $ y \notin \{v_1,\dots,v_k\} $. Since $ \{v_1,\dots,v_r\} $ is the set of all triangulation points in $ G(\F_x) $, this is a contradiction. It follows that $ Q_2 $ is a triangulation point.
\end{proof}

\begin{lem} \label{lem:leaf_locations}
Let $ P \in [n] $ be a leaf in $ \F_x $. Let $ v_i \in [n] $ be the triangulation point adjacent to $ P $. Then we have
\[
P \in \begin{cases} (v_1,v_2), & \text{if } i = 1 \\
				    (v_{i-1},v_{i+1}), & \text{if } 1 < i < r \\
					(v_{r-1},v_r), & \text{if } i = r
\end{cases}
\]
\end{lem}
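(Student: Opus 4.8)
\emph{Plan.} The plan is to pin down the position of $P$ among the triangulation points and then derive the location from a single non‑crossing argument. Since $P$ is a leaf, it has degree $1$ in $G(\F_x)$ and hence is not a triangulation point, so $P\notin\{v_1,\dots,v_r\}$. Recalling $v_1=x+1$, $v_r=x-1$, the set $[n]\setminus\{x\}$ is exactly the interval $[v_1,v_r]$ with respect to $<_x$, and $v_1<_x v_2<_x\dots<_x v_r$ is the \emph{complete} list of triangulation points; therefore $P$ lies in a unique ``gap'' $(v_k,v_{k+1})$ for some $1\le k\le r-1$, and that gap contains no triangulation point. By Theorem~\ref{thm:triangulation} the vertices $v_1,\dots,v_r$ span a triangulation of an $r$-gon in which consecutive vertices are joined by an edge, so $\{x,v_k,v_{k+1}\}\in\F$. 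The leaf edge at $P$ gives $\{x,P,v_i\}\in\F$, so $\{x,P,v_i\}$ and $\{x,v_k,v_{k+1}\}$ do not cross.

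The core step is to show $v_i\in\{v_k,v_{k+1}\}$. Suppose not. Since $v_i$ is a triangulation point, it does not lie in the gap $(v_k,v_{k+1})$, so either $v_i<_x v_k$ or $v_{k+1}<_x v_i$. In either case $v_k,v_{k+1},P,v_i$ are pairwise distinct, and on the circle the chord $\overline{Pv_i}$ separates $v_k$ from $v_{k+1}$ (as $P$ lies strictly between $v_k$ and $v_{k+1}$ on the short arc while $v_i$ lies outside it); hence $\overline{Pv_i}$ crosses $\overline{v_kv_{k+1}}$. Since $P,v_i\notin\{x,v_k,v_{k+1}\}$ and $v_k,v_{k+1}\notin\{x,P,v_i\}$, this shows $\{x,P,v_i\}$ crosses $\{x,v_k,v_{k+1}\}$, contradicting that both lie in the weakly separated family $\F$. (When instead $v_i\in\{v_k,v_{k+1}\}$ the two triangles share a common $2$-element subset, so no crossing is possible — consistent, not contradictory.)

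From $i\in\{k,k+1\}$ we get $P\in(v_i,v_{i+1})$ (if $i=k$) or $P\in(v_{i-1},v_i)$ (if $i=k+1$). For $1<i<r$ this is precisely $P\in(v_{i-1},v_{i+1})$. For $i=1$, the option $P\in(v_{i-1},v_i)$ reads $P\in(v_r,v_1)=(x-1,x+1)=\{x\}$, which contains no vertex of $G(\F_x)$, leaving only $P\in(v_1,v_2)$; symmetrically, for $i=r$ only $P\in(v_{r-1},v_r)$ survives. I do not expect a genuine obstacle here: the crossing step is a direct application of the circle picture underlying Lemma~\ref{lem:cases_crossing}, and the only real care needed is the bookkeeping of the cyclic intervals at the endpoints $i=1$ and $i=r$, i.e. the conventions $v_0=v_r$, $v_{r+1}=v_1$ and the observation that the wrap‑around interval $(x-1,x+1)$ is vertex‑free.
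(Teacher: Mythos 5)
Your proof is correct and follows essentially the same route as the paper's: locate $P$ in the unique gap $(v_k,v_{k+1})$ between consecutive triangulation points, then use the edge $\{x,v_k,v_{k+1}\}\in\F$ together with a crossing argument to force $v_i\in\{v_k,v_{k+1}\}$. Your handling of the endpoint cases $i=1$ and $i=r$ is slightly more explicit than the paper's, but the underlying argument is identical.
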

\begin{proof}
Let $ v_j $ be the maximal triangulation point with $ v_j <_x P $. Such a triangulation point exists since $ x $ is not a vertex in $ \F_x $ and since $ x + 1 $ is a triangulation point and therefore $ v_1 <_x P $. We also have $ j < r $, since $ v_r >_x P $. It follows that $ P \in (v_j, v_{j+1}) $. If $ j = i $ or $ i = j+1 $ we are done. \\
Otherwise $ v_i \notin (v_j,v_{j+1}) $. But $ v_j $ and $ v_{j+1} $ are two neighbouring triangulation points and by Theorem \ref{thm:triangulation} there is an edge between $ v_j $ and $ v_{j+1} $ in $ G(\F_x) $. Thus $ \{x,v_j,v_{j+1}\} \in \F $ and this triangle crosses $ \{x,P,v_i \} \in \F $. Contradiction.
\end{proof}

\begin{lem} \label{lem:border_triangles}
Let $ \F $ be a maximal family of weakly separated triangles in $ [n] $ and let $ B := v_i \in [n] $ be a triangulation point. Let $ P_1,...P_s $ be the leaves in $ B $, ordered. Let $ P_0 $ be the preceeding triangulation point $ v_{i-1} $ and $ P_{s+1} $ the next triangulation point $ v_{i+1} $ (with the conventions $ v_0 = v_r $ and $ v_{r+1} = v_1 $). Then the triangles $ \{B,P_j,P_{j+1}\} $ for $ 0 \leq j \leq s $ are all elements of $ \F $, except possibly if $ B = v_1 $ and $ j = 0 $ or $ B = v_r $ and $ j = s $.
\end{lem}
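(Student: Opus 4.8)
The plan is to fix $j$ with $0\le j\le s$ outside the two excluded cases, put $T_j:=\{B,P_j,P_{j+1}\}$, and prove $T_j\in\F$; since $\F$ is maximal, by the remark following the definition of a maximal weakly separated family it suffices to show that $T_j$ crosses no triangle of $\F$. First I would set up the cyclic picture. By Theorem~\ref{thm:triangulation} consecutive triangulation points are adjacent in $G(\F_x)$, so $\{x,B,P_0\}=\{x,v_{i-1},v_i\}$ and $\{x,B,P_{s+1}\}=\{x,v_i,v_{i+1}\}$ lie in $\F_x$, and by definition so does $\{x,B,P_m\}$ for every leaf $P_m$ in $B$. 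Using Lemma~\ref{lem:leaf_locations} together with $v_1=x+1$, $v_r=x-1$, one checks: the points $P_0,\dots,P_{s+1}$ and $B$ occur in the cyclic order $P_0,\dots,P_t,B,P_{t+1},\dots,P_{s+1}$ for some $0\le t\le s$; they lie on the arc $[v_{i-1},v_{i+1}]$; $P_j$ and $P_{j+1}$ are rotationally consecutive neighbours of $B$ in $G(\F_x)$; and --- crucially --- the smallest arc containing $\{B,P_j,P_{j+1}\}$ contains $x$ \emph{exactly} in the two excluded cases, and not for our $j$. In particular the arc between $P_j$ and $P_{j+1}$ not through $B$ avoids $x$, and the cyclic order of $T_j$ is one of $(P_j,P_{j+1},B)$, $(P_j,B,P_{j+1})$, $(B,P_j,P_{j+1})$.

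Suppose now $D\in\F$ crosses $T_j$; set $E:=\{x,B,P_j\}$ and $E':=\{x,B,P_{j+1}\}$, both in $\F$. By the characterisation of crossing in terms of crossing chords (the Remark), the crossing is witnessed by a chord $\overline{\alpha\beta}$ of $T_j$ with $\alpha,\beta\in T_j\setminus D$ crossing a chord $\overline{\gamma\delta}$ of $D$ with $\gamma,\delta\in D\setminus T_j$. If $x\notin D$ and $\{\alpha,\beta\}$ is $\{B,P_j\}$ or $\{B,P_{j+1}\}$, then $\overline{\alpha\beta}$ is a chord of $E$, resp.\ $E'$; since $\gamma,\delta\notin T_j$ and $\gamma,\delta\neq x$ we have $\gamma,\delta\notin E$, resp.\ $E'$, so $\overline{\gamma\delta}$ witnesses that $D$ crosses $E$, resp.\ $E'$ --- a contradiction. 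This leaves two residual situations: (a) $x\notin D$ and $\{\alpha,\beta\}=\{P_j,P_{j+1}\}$; (b) $x\in D$.

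In case (a), $\overline{\gamma\delta}$ crosses $\overline{P_jP_{j+1}}$, so one of $\gamma,\delta$ --- say $\gamma$ --- lies on the arc between $P_j$ and $P_{j+1}$ not through $B$, and $\delta$ on the complementary arc, which contains $x$ and $B$. Since $P_j,P_{j+1}$ are rotationally consecutive neighbours of $B$, the first arc contains no neighbour of $B$, and by Lemma~\ref{lem:leaf_locations} it contains no triangulation point and no leaf of any $v_k$ other than $P_0$ (and only if $j=0$) or $P_{s+1}$ (and only if $j=s$). Tracking where $\delta$ lies on the complementary arc --- before or after $x$ --- and invoking Lemma~\ref{lem:cases_crossing} where convenient, one shows $\overline{\gamma\delta}$ must cross $\overline{xP_j}$ or $\overline{xP_{j+1}}$, so $D$ crosses $E$ or $E'$; the only escape is $\delta=B$, i.e.\ $B\in D$, which I handle by re-running the argument in the configuration where $D$ shares $B$ with $T_j$, so that the crossing can only be $\overline{\gamma\delta}\times\overline{P_jP_{j+1}}$, and again $D$ is forced to cross $E$ or $E'$. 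In case (b), write $D=\{x,p,q\}\in\F_x$; since $D$ crosses $T_j$, at least one of $p,q$ --- say $p$ --- lies on the arc $[v_{i-1},v_{i+1}]$. If $p$ lies strictly inside $(v_{i-1},v_i)$ or $(v_i,v_{i+1})$ it is a leaf, hence of degree $1$ in $G(\F_x)$, forcing $q$ to be the triangulation point it hangs from; then comparing the $\le_x$-intervals of the pairwise weakly separated $x$-triangles $D$, $E$, $E'$, $\{x,v_{i-1},p\}$, $\{x,v_{i+1},p\}$ shows $D$ must cross one of the latter four --- impossible. If instead $p\in\{v_{i-1},v_i,v_{i+1}\}$, then $D$ shares a vertex with $T_j$, $E$, or $E'$, and a direct check of where the chord through $x$ can cross shows $D$ does not in fact cross $T_j$. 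Every case yields a contradiction, so no such $D$ exists and $T_j\in\F$.

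The hard part is the residual analysis (a)--(b): the easy reduction to $E$ and $E'$ breaks down precisely when $D$ shares a vertex with $T_j$ or contains $x$, and handling those requires the fine structure of $G(\F_x)$ from Theorem~\ref{thm:triangulation} and Lemma~\ref{lem:leaf_locations} --- leaves have degree $1$, neighbouring triangulation points are adjacent, leaves of $v_i$ lie in $(v_{i-1},v_{i+1})$ --- plus a careful split into sub-cases. It is also exactly here that the two exceptions are forced: when $B=v_1$ and $j=0$ (and symmetrically $B=v_r$, $j=s$), the arc between $P_j$ and $P_{j+1}$ not through $B$ does contain $x$, with the continuous triangle $\{x-1,x,x+1\}=\{x,v_r,v_1\}$ and the neighbouring $x$-triangles sitting inside it, so the chord-chasing of case (a) collapses and $T_0$ can genuinely cross a triangle of $\F$; the lemma excludes exactly those $j$.
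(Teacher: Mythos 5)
Your overall strategy is the paper's: assume some $D\in\F$ crosses $T_j=\{B,P_j,P_{j+1}\}$ and force $D$ to cross one of the triangles $\{x,B,P_j\}$, $\{x,B,P_{j+1}\}$, $\{x,v_{i-1},v_i\}$, $\{x,v_i,v_{i+1}\}$ already known to lie in $\F$, with the exceptional $j$ emerging as the cases where $x$ sits inside the relevant arc. The reduction in your second paragraph (if $x\notin D$ and the witnessing chord of $T_j$ is $\overline{BP_j}$ or $\overline{BP_{j+1}}$, then $D$ already crosses $E$ or $E'$) is fine. But the geometric claim on which you hang the residual analysis is false: ``the arc between $P_j$ and $P_{j+1}$ not through $B$ avoids $x$'' fails exactly when $(P_j,B,P_{j+1})$ is cyclically ordered, i.e.\ for the unique $t$ with $B\in(P_t,P_{t+1})$. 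For $1<i<r$ this $j=t$ is \emph{not} an excluded case, yet there the arc not through $B$ is $(P_{j+1},P_j)$, which \emph{contains} $x$, while the complementary arc contains $B$ but not $x$ --- the opposite of what you assert. Your diagnosis of the exceptions is reversed in the same way: for $B=v_1$, $j=0$ one has $P_0=x-1$ and $P_1\in(x+1,v_2]$, so $x$ lies on the arc \emph{through} $B$, not on the one avoiding it. The dichotomy the paper actually exploits is whether $x\in(P_j,B)\cup(B,P_{j+1})$, which holds exactly in the two excluded cases. Consequently the ``tracking where $\delta$ lies --- before or after $x$'' step of your case (a) is carried out in the wrong arc for $j=t$; the conclusion there is still true (with $\gamma\in(P_j,B)\cup(B,P_{j+1})$ and $\delta\in(P_{j+1},x)\cup(x,P_j)$ one checks in each of the four combinations that $\overline{\gamma\delta}$ crosses $\overline{xP_j}$ or $\overline{xP_{j+1}}$), but that is a different computation from the one you describe, and it is precisely the computation whose failure is supposed to isolate the exceptional $j$.

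Beyond this, the decisive content is left as assertions: ``one shows $\overline{\gamma\delta}$ must cross $\overline{xP_j}$ or $\overline{xP_{j+1}}$'', ``a direct check \dots shows $D$ does not in fact cross $T_j$'', ``comparing the $\le_x$-intervals \dots shows $D$ must cross one of the latter four''. These case analyses are where the paper spends essentially its entire proof, and they are not routine; for instance, in your case (b) the triangles $\{x,v_{i-1},p\}$ and $\{x,v_{i+1},p\}$ are not both in $\F$ --- only the one recording the actual adjacency of the leaf $p$ is --- so the list of ``pairwise weakly separated $x$-triangles'' you propose to compare against is not available as stated. In summary: same architecture as the paper, but the organizing geometric claim is wrong in the main sub-case and the verifications that constitute the proof are missing.
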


\begin{proof}
 Let $ t = \{B,P_j,P_{j+1}\} $ be such a triangle. Let $ s = \{u,v,w\} \in \F $ be a triangle that crosses $ t $. First we assume that $ B \in (P_j,P_{j+1}) $. \\
 
 Case 1: $ u \in (P_j,B), v \in (B,P_{j+1}) $ and $ w \neq B $. \\
 First assume $ u = x $. This implies $ x \in (P_j,B) $. In particular, $ x \in (P_0,B) = (v_{i-1},v_i) $. But this is only possible if $ i = 1 $, i.E. $ B = x+1 $ and $ P_0 = x-1 $. Now $ B \in (P_j,P_{j+1}) $ implies $ j = 0 $. This case is excluded in the statement of the Theorem. Similarly $ v = x $ would lead to $ B = v_r $ and $ j = s $. \\
 Therefore we can assume $ u \neq x $ and $ v \neq x $. Then $ \overline{uv} $ crosses $ \{x,P_j,B\} $ in $ \overline{P_jB} $ and $ \{x,B,P_{j+1}\} $ in $ \overline{BP_{j+1}} $, unless $ w = P_j $ and $ w = P_{j+1} $. Since both cannot be true at the same time, we have a contradiction. \\
 
 Case 2: $ u \in (P_j,B) $, $ w \in (P_{j+1},P_j) $ and $ v \neq P_j $. \\
 Then $ \overline{uw} $ crosses $ \{x,P_j,B\} \in \F $ in $ \overline{P_jB} $, unless $ v = B $ or $ u = x $ or $ w = x $.
 The possibility $ u = x $ can be ruled out like in case 1. \\
 If $ w = x $, then $ \overline{uv} $ is an edge in $ G(\F_x) $. Accordingly, $ u $ is a vertex of this graph and either a triangulation point or a leaf. But $ u \in (P_0,B) $ cannot be a triangulation point since there are no triangulation points between $ P_0 = v_{i-1} $ and $ B = v_i $. And $ u \in (P_j,P_{j+1}) $ cannot be a leaf in $ B $. Therefore $ u $ is a leaf in another triangulation point $ v = v_k $. But if $ v_k < P_j $ then the triangles $ \{x,v_k,u\} \in \F $ and $ \{x,B,P_j\} \in \F $ cross. If $ v_k >_x P_j $ then $ v_k \geq v_{i+1} $. In this case $ \{x, u, v_k\} \in \F $ crosses $ \{x,v_{i-1},B\} \in \F $. The case $ v = P_j $ is ruled out by our assumption $ v \neq P_j $. \\
 Therefore we have $ w \neq x $ and $ v = B $. $ w \in (P_{j+1},P_j) $ and $ w \neq x $ implies $ w \in (P_{j+1},x) $ or $ w \in (x,P_j) $. In the former case $ \overline{uw} $ crosses $ \{x,B,P_j+1\} \in \F $ in $ \overline{xP_{j+1}} $. In the latter case $ \overline{uw} $ crosses $ \{x,P_j,B\} \in \F $ in $ \overline{xP_j} $. Contradiction. \\
 
 Case 3: $ v \in (B,P_{j+1}) $, $ w \in (P_{j+1}, P_j) $ and $ u \neq P_{j+1} $. \\
 This case is symmetric to case 2 (with the exception that $ v = x $ is contradicted as in case $ 1 $ instead of $ u = x $). \\
 
 Now assume $ P_{j+1} \in (P_j,B) $ (the case $ P_j \in (B,P_{j+1}) $ is symmetric to this, hence an analogous argument applies). \\
 
 Case 1: $ u \in (P_j,P_{j+1}) $, $ v \in (P_{j+1},B) $ and $ w \neq P_{j+1} $. \\
 Here we have $ u \neq x $, because otherwise $ B = x+1 $ and $ P_0 = x-1 $, which would imply $ P_{j+1} \in (x,x+1) $. We also have $ v \neq x $. Therefore $ \overline{uv} $ crosses $ \{x,P_{j+1},B\} \in \F $ in $ \overline{P_{j+1}B} $ and $ \overline{P_{j+1}x} $ unless $ w = B $ and $ w = x $, which cannot both be true. Contradiction. \\
 
 Case 2: $ u \in (P_j, P_{j+1}) $, $ w \in (B,P_j) $ and $ v \neq P_j $. \\
 Then $ \overline{uw} $ crosses $ \{x,P_j,B\} \in \F $ in $ \overline{P_jB} $, unless $ v = B $ or $ w = x $ ($ u = x $ is impossible like in case 1). If $ w = x $, then $ \overline{uv} $ is an edge in $ G(\F_x) $, but just like before $ u $ cannot be a vertex in this graph. \\
 Hence $ w \neq x $ and $ v = B $. Then $ \overline{uw} $ crosses $ \{x,P_j,B\} \in \F $ in $ \overline{xP_j} $ or $ \{x,P_{j+1},B\} \in \F $ in $ \overline{xP_{j+1}} $ depending on whether $ w \in (x,P_j) $ or $ w \in (B,x) $. \\
 
 Case 3: $ v \in (P_{j+1},B) $, $ w \in (B,P_j) $ and $ u \neq B $. \\
 Then $ \overline{vw} $ crosses $ \{x,P_j,B\} \in \F $ in $ \overline{P_jB} $, unless $ w = x $ or $ u = P_j $. If $ w = x $, then $ \overline{uv} $ is an edge in $ G(\F_x) $. Hence $ v $ is a vertex in this graph. Since $ v \in (P_0,B) $, $ v $ cannot be a triangulation point and is thus a leaf. By Lemma \ref{lem:leaf_locations} this leaf is connected to $ P_0 $ or $ B $. If it is connected to $ P_0 $, then $ \{x,P_0,v\} \in \F $ and this crosses $ \{x,P_{j+1},B\} \in \F $. 
 We can conclude that $ v $ is a leaf connected to $ B $. Then $ \overline{vu} $ must be the one edge incident to $ v $ and $ u = B $ must be its other vertex. This contradicts our assumption $ u \neq B $. \\
 It follows that $ w \neq x $ and thus $ u = P_j $. This implies that $ \overline{uv} $ crosses $ \{x,P_{j+1},B\} \in \F $ in $ \overline{P_{j+1}B} $. Contradiction. \\
 
 Since $ s $ does not exist, it follows that $ t = \{B,P_j,P_{j+1}\} \in \F $.
\end{proof}

\begin{defi}
 We call the triangles in Lemma \ref{lem:border_triangles} border triangles of $ \F_x $.
\end{defi}

We can also show the inverse of Theorem \ref{thm:triangulation}:

\begin{theorem}

Let $ n \in \N $ and $ x \in [n] $. Let $ G $ be a graph such that the vertices are a subset of $ [n] \backslash \{x\} $. Let $ T $ be a complete subgraph of $ G $. Assume that the following conditions hold:

\begin{itemize}

	\item[(i)] $ T $ is a triangulation of a polygon and $ x-1 $ and $ x+1 $ are vertices in $ T $.
	\item[(ii)] All vertices in $ G $ that are not in $ T $ have degree $ 1 $.
	\item[(iii)] Every vertex $ l $ of degree $ 1 $ is either adjacent to the maximal vertex $ v \in T $ with $ v <_x l $ or the minimal vertex $ v \in T $ with $ l <_x v $.
	\item[(iv)] Let $ t_1 $ and $ t_2 $ be vertices of $ T $ with $ t_1 <_x t_2 $. If $ l_1 $ is a leaf in $ G $ adjacent to $ t_1 $ and $ l_2 $ a leaf adjacent to $ t_2 $, then $ l_1 <_x l_2 $.
	\item[(v)] $ x-2 $ and $ x+2 $ are vertices in $ G $ and the edges $ \{x-2,x-1\} $ and $ \{x+1,x+2\} $ exist in $ G $.

\end{itemize}

Then there is a maximal weakly separated family $ \F $ of triangles in $ [n] $ such that $ G(\F_x) = G $.

\end{theorem}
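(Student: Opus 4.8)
The plan is to \emph{construct} a suitable $\F$ by hand, building it up in layers that mirror Theorem \ref{thm:triangulation}, and then to check that the result is weakly separated and maximal. Write $v_1<_x\dots<_x v_r$ for the vertices of $T$; by (i), together with the fact that $x+1$ is the $<_x$-minimum and $x-1$ the $<_x$-maximum of $[n]\setminus\{x\}$, we get $v_1=x+1$ and $v_r=x-1$ (with the cyclic order on $T$ induced from $[n]$). The first layer is $\F^0_x:=\{\,\{x,a,b\}:\{a,b\}\in E(G)\,\}$; the second layer is the family of border triangles associated to this data as in Lemma \ref{lem:border_triangles} (for each $i$: $P_0=v_{i-1}$, then the leaves of $v_i$ in $<_x$-order, then $P_{s+1}=v_{i+1}$, dropping the two outermost pieces at $v_1$ and $v_r$). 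The third layer handles everything away from $x$: each interval $(v_i,v_{i+1})$, $1\le i\le r-1$, together with $v_i$ and $v_{i+1}$, is a sub-polygon on a contiguous block of $[n]$; after peeling off the two leaf-fans it contains (these being exactly what the border triangles already triangulate) one is left with a smaller polygon, and into it one places a maximal weakly separated family of triangles chosen by a fixed recipe (e.g.\ a fan triangulation), recursively if necessary. This third layer is what covers the points of $[n]\setminus\{x\}$ that are not vertices of $G$ at all. Set $\F$ to be the union of the three layers. Since $(v_r,v_1)$ contains only $x$, and $x$ occurs in no triangle outside $\F^0_x$, we will have $G(\F_x)=G$ as soon as we know $\F$ is weakly separated and $\F_x=\F^0_x$; and $\F_x=\F^0_x$ is \emph{part of} ``weakly separated and maximal'', because if some $\{x,a,b\}\notin\F^0_x$ could be added then $\F$ would not be maximal.

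The first real task is to verify that $\F$ is weakly separated, which is a case analysis driven by Lemma \ref{lem:cases_crossing} and organised by which layer each of the two triangles belongs to:
\begin{itemize}
 \item two triangles of $\F^0_x$ cannot cross, since (sharing $x$) the only candidate crossing chords are $\overline{ab},\overline{cd}$ with $\{a,b\},\{c,d\}\in E(G)$, and those do not cross: $T$ is a triangulation, and by (iii)--(iv) the leaf edges avoid each other and the edges of $T$;
 \item a border triangle crosses neither $\F^0_x$ nor another border triangle — this is exactly the computation inside the proof of Lemma \ref{lem:border_triangles}, now read as a direct verification rather than a \emph{reductio}, and (i)--(v) are precisely the hypotheses that make that computation valid in this direction;
 \item a third-layer triangle sits inside a single cell $(v_i,v_{i+1})$ (with the leaf-fans removed), hence is shielded from $\F^0_x$, from the border triangles, and from third-layer triangles of other cells by the chord $\overline{v_iv_{i+1}}$ that bounds the cell; within a cell, weak separation is the (inductive / standard) property of the chosen sub-family.
\end{itemize}

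Next I would prove maximality: every triangle $B\notin\F$ crosses some member of $\F$. If $x\in B$, say $B=\{x,a,b\}$ with $\{a,b\}\notin E(G)$, I would locate $a$ and $b$ relative to the $v_i$ and exhibit a crossing triangle — a member of $\F^0_x$, a border triangle, or (when one of $a,b$ is an interior non-vertex of a cell) a third-layer triangle; typically the chord $\overline{v_iv_{i+1}}$ bounding the relevant cell together with a spoke $\overline{xa}$ or $\overline{xb}$ provides the crossing, and Lemmas \ref{lem:leaf_locations} and \ref{lem:new_triangle} control which case occurs. If $x\notin B$, then $B$ lies in $[n]\setminus\{x\}$: if it is contained in one cell (leaf-fans included) it crosses something by maximality of the chosen sub-family plus the border triangles, and if it straddles two cells or two fan regions it must cross one of the $x$-triangles or border triangles separating them. (Alternatively, if one is willing to invoke that all maximal weakly separated families of triangles in $[n]$ have the same cardinality, maximality reduces to a single count: $|\F^0_x|+|\text{border}|+|\text{fill}|$.)

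The step I expect to be the genuine obstacle is pinning down the third layer correctly and carrying out the resulting cross-layer checks: the reduction of an interval $(v_i,v_{i+1})$ to a sub-polygon must be arranged so that its triangles are truly shielded by $\overline{v_iv_{i+1}}$, and one must be careful at the ``seam'' where a cell meets the neighbouring leaf-fans and border triangles. It is worth flagging where the hypotheses are used: (v) forces the continuous triangles $\{x-2,x-1,x\},\{x-1,x,x+1\},\{x,x+1,x+2\}$ into $\F^0_x$ (without it the first and last fans, which are mandatory in any maximal family, would be wrong), while (iii)--(iv) are exactly what make the border triangles and the cell/fan seams mutually consistent. I would expect the bulk of the proof to be this bookkeeping rather than any single decisive idea.
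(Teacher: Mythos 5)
Your first two layers coincide with the paper's construction (the $x$-triangles coming from edges of $G$, plus the border-triangle fans $\B$), and checking that this union is weakly separated is indeed the paper's Step 1. But the paper then takes a much shorter route that your proposal is missing: it simply extends $\F\cup\B$ to \emph{some} maximal weakly separated family $\widetilde{\F}$ (which always exists) and proves, as its Step 2, that the border triangles block any further triangle through $x$ — no new vertex and no new edge can appear in $G(\widetilde{\F}_x)$, so $G(\widetilde{\F}_x)=G$. Nothing away from $x$ ever needs to be constructed or controlled. Your plan instead tries to build an explicit maximal family by filling each cell $(v_i,v_{i+1})$ with its own triangulation-type family and then proving maximality directly, and this is where there is a genuine gap, not mere bookkeeping.

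Concretely, the claim that a triangle straddling two cells ``must cross one of the $x$-triangles or border triangles separating them'' is false, and with it the maximality argument collapses. Take a triangulation point $v_{i+1}\notin\{x\pm1\}$ whose circle-neighbours $v_{i+1}-1$ and $v_{i+1}+1$ are not vertices of $G$ (nothing in (i)--(v) forbids this). Then the continuous triangle $\{v_{i+1}-1,\,v_{i+1},\,v_{i+1}+1\}$ crosses no triangle whatsoever, so it lies in every maximal weakly separated family; yet it has one vertex in each of two adjacent cells, so it belongs to none of your three layers and crosses none of your ``separators.'' Hence your explicitly built $\F$ is not maximal, and more generally cell-by-cell fills cannot account for the triangles (of which there are many, since maximal families have $3n-8$ elements) whose supports cross cell boundaries without crossing the $x$-triangles. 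The cardinality shortcut you mention suffers from the same defect: you would have to show your construction reaches $3n-8$ triangles, which it does not. The repair is precisely the paper's idea: stop after your second layer, invoke existence of a maximal extension, and then prove an analogue of the paper's Step 2 — that any triangle $\{x,\widetilde{P},\widetilde{Q}\}$ not coming from an edge of $G$ is forced to cross some triangle of $\F\cup\B$. That argument (a short case analysis against the fans $\{P_i,Q_{i,j},Q_{i,j+1}\}$) is the actual content of the theorem and is absent from your proposal.
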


\begin{proof}
 Let $ \F $ be the family of triangles $ \{x,a,b\} $ where $ \{a,b\} $ is an edge in $ G $. Also, let $ P_0,...,P_s $ be the vertices of $ T $ in cyclical order with $ P_0 = x+1 $ and $ P_s = x-1 $. For $ i = 0,\dots,s $ let $ Q_{i,1},\dots,Q_{i,r_i-1} $ be the leaves in $ G $ adjacent to $ P_i $ ($ r_i = 1 $ is possible). We use the convention $ Q_{i,0} = P_{i-1} $ and $ Q_{i,r_i} = P_{i+1} $ with $ P_{-1} = P_s $ and $ P_{s+1} = P_0 $. Then let $ \B $ be the family of triangles $ \{P_i,Q_{i,j},Q_{i,j+1}\} $ for $ i = 0,\dots,s $ and $ j = 0,\dots,r_i-1 $, except in the cases  $ i = j = 0 $ and $ i= s, j = r_s - 1 $. \\
 Step 1: $ \F \cup \B $ is a weakly separated family of triangles.\\
 Assume there are two crossing triangles $ \{x,a,b\}, \{x,c,d\} \in \F $. Then the line $ \overline{ab} $ must cross $ \overline{cd} $, since both triangles contain $ x $. In particular, $ \{a,b\} $ and $ \{c,d\} $ cannot both be edges in $ T $, since the lines in the triangulation of a polygon never cross. Without loss of generality we can assume $ a <_x c <_x b <_x d $. \\
 Now assume that the edge $ \{a,b\} $ is not in $ T $, but $ \{c,d\} $ is. The fact that $ T $ is a complete subgraph then implies that $ a $ or $ b $ is not in $ T $. Then $ a $ or $ b $ is a leaf. If $ a $ is a leaf then $ b $ is the one vertex in $ T $ adjacent to $ a $. We have $ a <_x b $, but $ c $ is another vertex in $ T $ with $ a <_x c <_x b $. Clearly $ b $ is not minimal in $ T $ with this property, which contradicts $ (iii) $. If $ b $ is the leaf, we have $ a <_x b $, but $  $ is not maximal, since $ a <_x c <_x b $. Again this contradicts $ (iii) $. \\
 The case that $ \{a,b\} $ is in $ T $ while $ \{c,d\} $ is not follows analogously. \\
 We can now assume that neither $ \{a,b\} $ nor $ \{c,d\} $ are edges in $ T $. We no longer assume $ a <_x c <_x b <_x d $, instead we assume that $ a $ is the leaf in $ \{a,b\} $ and $ c $ the leaf in $ \{c,d\} $. Then $ b $ and $ d $ are vertices in $ T $. If $ b = d $ the triangles have two common points and cannot cross. In the case $ b <_x d $ we have $ a <_x c $ by $ (iv) $. And $ (iii) $ implies $ b <_x c $ and $ a <_x d $. Consequently $ a,b <_x c,d $ and the triangles do not cross. The same argument works in the case $ d <_x b $ too.\\
 
 Now let $ t = \{x,a,b\} \in \F $ and $ u = \{P_i,Q_{i,j},Q_{i,j+1}\} \in \B $. $ t $ and $ u $ cross if and only if there are elements $ y,z \in t \backslash u $ and $ v,w \in u \backslash t $ such that the lines $ \overline{yz} $ and $ \overline{vw} $ cross. By $ (iii) $ we have $ u \subset [P_{i-1},P_{i+1}] $. Thus the fact that $ \overline{yz} $ crosses $ \overline{vw} $ implies that $ (P_{i-1},P_{i+1}) $ contains an element of $ t \backslash u $. \\
 
 Case 1: $ x \in (P_{i-1}, P_{i+1}) $. Then $ i = 0 $ or $ i = s $. But in the former case we have $ j > 0 $ and thus $ Q_{i,j} \neq P_{i-1} = x-1 $. This implies $ u \subset (x,P_{i+1}] = [P_i,P_{i+1}] $. Then $ t $ and $ u $ can only cross if $ a \in (P_i,P_{i+1}) $ or $ b \in (P_i,P_{i+1}) $ and we are in case $ 2 $. The case $ i = s $ can be similarly discounted.
 
 Case 2: $ a \in (P_{i-1}, P_{i+1}) \backslash u $ or $ b \in (P_{i-1},P_{i+1}) \backslash u $. We can assume $ b \in (P_{i-1}, P_{i+1}) \backslash u $ without loss of generality. Then $ b $ is a leaf and $ a $ is a vertex in $ T $. By $ (iii) $ we have $ a \in \{P_{i-1},P_i,P_{i+1} \} $. \\
 First assume $ a = P_{i-1} $. If $ j > 0 $, then $ a <_x P_i, Q_{i,j}, Q_{i,j+1} $. And by $ (iii) $ and $ (iv) $ we also have $ b <_x P_i, Q_{i,j}, Q_{i,j+1} $. Thus the triangles do not cross. If $ j = 0 $ then $ a = Q_{i,j} $. But we still have $ b <_x P_i, Q_{i,j+1} $ and the triangles do not cross. \\
 The case $ a = P_{i+1} $ is symmetrical to $ a = P_{i-1} $. \\
 Finally assume $ a = P_i $. Then $ b = Q_{i,k} $ for some $ k \in \{1,...,r_i - 1 \} $ and $ b \notin u $ implies $ k \notin \{j,j+1\} $. Hence $ k < j $ or $ k > j+1 $ and therefore $ b <_x Q_{i,j}, Q_{i,j+1} $ or $ b >_x Q_{i,j}, Q_{i,j+1} $. In neither case do the triangles cross. \\
 
 It remains to show that two triangles $ t = \{P_i, Q_{i,j}, Q_{i,j+1}\}, u = \{P_k, Q_{k,l}, Q_{k,l+1}\} \in \B $ do not cross. If $ i \neq k $, then we can assume $ i < k $. In this case $ (iii) $ and $ (iv) $ imply $ P_i, Q_{i,j} \leq_x P_k, Q_{k,l+1} $. If $ Q_{i,j+1} $ is a leaf or $ i < k-1 $, then also $ Q_{i,j+1} \leq_x P_k, Q_{k,l+1} $. Otherwise $ Q_{i,j+1} = P_k $ and thus $ Q_{i,j+1} \in t \cap u $. Similarly we have $ P_i, Q_{i,j} <_x Q_{k,l} $ or $ Q_{k,l} \in t \cap u $. Hence $ t \backslash u <_x u \backslash t $ and the triangles do not cross. \\
 Therefore we can assume $ i = k $. If $ j = l $ the triangles are one and the same and do not cross. We can assume $ j < l $. Then $ Q_{i,j}, Q_{i,j+1} \leq_x Q_{k,l},Q_{k,l+1} $ and again, the triangles do not cross. \\
 
 We have shown that $ \F \cup \B $ is a weakly separated family of triangles. Therefore there is a maximal weakly separated family $ \widetilde{\F} $ containing $ \F \cup \B $. Set $ \widetilde{G} := G(\widetilde{\F}_x) $. Clearly $ G $ is a subgraph of $ \widetilde{G} $. \\
 
 Step 2: Show $ G = \widetilde{G} $. \\
 Assume there is a vertex $ \widetilde{P} $ in $ \widetilde{G} $ that is not in $ G $. $ \widetilde{P} $ has degree $ \geq 1 $ by definition of the graph. Therefore there is a triangle $ \{x,\widetilde{P}, \widetilde{Q}\} \in \widetilde{\F} $. Let $ i $ be maximal with $ P_{i-1} <_x \widetilde{P} $. Then $ \widetilde{P} \in (P_{i-1},P_i) $. Now $ \overline{x\widetilde{P}} $ crosses $ \{P_i,Q_{i,0},Q_{i,1}\} = \{P_i,P_{i-1},Q_{i,1}\} \in \B $ in $ \overline{P_{i-1},P_i} $ unless $ \widetilde{Q} \in \{P_{i-1},P_i\} $.\\
 
 Case 1: $ Q_{i,1} <_x \widetilde{P} $. \\
 Let $ j>0 $ be maximal with $ Q_{i,j} <_x \widetilde{P} $. Then $ \overline{x\widetilde{P}} $ crosses $ \{P_i,Q_{i,j},Q_{i,j+1}\} \in \B $ in $ \overline{Q_{i,j}Q_{i,j+1}} $, unless $ \widetilde{Q} \in \{Q_{i,j},Q_{i,j+1}\} $. But $ \{P_{i-1},P_i\} \cap \{Q_{i,j},Q_{i,j+1}\} = \emptyset $ since $ j > 0 $. Hence $ \widetilde{Q} $ cannot fulfill both conditions and $ \{x,\widetilde{P},\widetilde{Q}\} $ crosses a triangle in $ \B $. Contradiction to $ \widetilde{\F} $ being weakly separated. \\
 
 Case 2: $ \widetilde{P} <_x Q_{i-1,r_{i-1}-1} $. \\
 This case is symmetrical to case 1. \\
 
 Case 3: $ Q_{i-1,r_{i-1}-1} <_x \widetilde{P} <_x Q_{i,1} $. \\
 Then $ \overline{x\widetilde{P}} $ crosses $ \{P_i,Q_{i,0},Q_{i,1}\} = \{P_i,P_{i-1},Q_{i,1}\} \in \B $ in $ \overline{P_{i-1}P_i} $ and $ \overline{P_{i-1}Q_{i,1}} $ unless $ \widetilde{Q} \in \{P_{i-1}, P_i\} $ and $ \widetilde{Q} \in \{P_{i-1}, Q_{i,1}\} $. This implies $ \widetilde{Q} = P_{i-1} $. \\
 Similarly $ \overline{x\widetilde{P}} $ crosses $ \{P_{i-1},Q_{i-1,r_{i-1}-1},Q_{i-1,r_{i-1}}\} = \{P_{i-1},Q_{i-1,r_{i-1}-1},P_i\} \in \B $ in $ \overline{Q_{i-1,r_{i-1}-1}P_i} $, unless $ \widetilde{Q} = Q_{i-1,r_{i-1}-1} $ or $ \widetilde{Q} = P_i $. Contradiction to $ \widetilde{Q} = P_{i-1} $. \\

 The only thing left is to prove that every edge in $ \widetilde{G} $ is also in $ G $. Let $ \{\widetilde{P}, \widetilde{Q} \} $ be an edge in $ \widetilde{G} $ that is not in $ G $. Assume that $ \widetilde{P}$ and $ \widetilde{Q} $ are vertices in $ T $. Since $ T $ is a triangulation, no new edge can be added without crossing an edge $ \{P_i, P_j \} $ in $ T $. But then $ \{x,\widetilde{P}, \widetilde{Q} \} $ would cross $ \{x,P_i,P_j\} $. Contradiction to $ \widetilde{\F} $ being weakly separated. \\
 Therefore we can assume $ \widetilde{Q} = Q_{i,j} $ for some $ i \in \{0,...,s\} $ and $ j \in \{1,...,r_i-1\} $. If $ \widetilde{P} = P_i $ then the edge is also in $ G $ and we are finished. Otherwise, if $ \widetilde{P} \notin [Q_{i,j-1}, Q_{i, j+1}] $ then $ \{x,\widetilde{P}, Q_{i,j}\}$ crosses $ \{x,P_i,Q_{i,j-1}\} $ or $ \{x,P_i,Q_{i,j+1}\} $. The only cases left to consider are thus $ \widetilde{P} = Q_{i,j-1} $ and $ \widetilde{P} = Q_{i,j+1} $. \\
 Assume $ \widetilde{P} = Q_{i,j-1} $. If $ P_i <_x Q_{i,j-1} $ then $ \{x, Q_{i,j-1}, Q_{i,j} \} $ crosses $ \{P_i, Q_{i,j}, Q_{i,j+1} \} \in \B $, since $ \overline{xQ_{i,j-1}} $ crosses $ \overline{P_iQ_{i,j+1}} $ in that case. Contradiction. \\
 If $ Q_{i,j} <_x P_i $ and $ j \geq 2 $ then $ \{x, Q_{i,j-1}, Q_{i,j} \} $ crosses $ \{ P_i, Q_{i,j-2}, Q_{i,j-1} \} \in \B $ in $ \overline{xQ_{i,j}} $ and $ \overline{Q_{i,j-2}P_i} $. If $ j = 1 $ then $ \{x, Q_{i,j-1}, Q_{i,j}\} $ crosses $ \{P_{i-1}, Q_{i-1,r_i-1}, P_i \} \in \B $ instead. Contradiction. \\
 If $ Q_{i,j-1} <_x P_i <_x Q_{i,j} $ then $ \{x, Q_{i,j-1}, Q_{i,j} \} $ crosses $ \{x, P_i, P_{i+1} \} $ in $ \overline{Q_{i,j-1} Q_{i,j}} $ and $ \overline{P_i P_{i+1}} $. Hence $ \widetilde{P} \neq Q_{i,j-1} $. $ \widetilde{P} \neq Q_{i,j+1} $ follows by symmetry. Contradiction.\\
 We have shown that every edge in $ \widetilde{G} $ is also in $ G $ and thus $ G = \widetilde{G} $. This proves the Theorem.
\end{proof}

\section{Application to Grassmannians}

The Grassmannian $ \G(k,n) $ (over $ \C $) is the set of $ k $-dimensional linear subspaces of $ \C^n $. This set can be embedded as a closed subvariety into projective space using the Plücker embedding:
\begin{align*}
\G(k,n) &\to \pr(\bigwedge\nolimits^k \C^n) \\
\langle v_1,\dots,v_k\rangle &\mapsto [v_1 \wedge \dots \wedge v_k]
\end{align*}
The vector space $ \bigwedge^k \C^n $ has a basis consisting of the vectors $ e_{i_1} \wedge \dots \wedge e_{i_k} $ where $ \{e_1, \dots, e_n\} $ is the standard basis of $ \C^n $ and $ 1 \leq i_1 < \dots < i_k \leq n $. Then the homogeneous coordinate ring $ \C[\pr(\bigwedge^k \C^n)] $ is generated by the homogeneous coordinates $ x_{i_1,\dots,i_k} $. The homogeneous coordinate ring $ \C[\G(k,n)] $ of the Grassmannian is the quotient with the ideal $ I $ generated by the so called Plücker relations 
\[
\Sigma_{l=0}^k (-1)^l x_{i_1,\dots,i_{k-1},j_l} x_{j_0, \dots, \hat{j_l}, \dots, j_k}
\]
for arbitrary $ 1 \leq i_1 < \dots < i_{k-1} \leq n $ and $ 1 \leq j_0 < \dots < j_n \leq n $.
\begin{defi}
We will denote the images of the linear coordinates $ x_{i_1,...,i_k} $ under this quotient by $ \Delta^{i_1,...,i_k} $ and call them the Plücker coordinates.
\end{defi}

It has been shown that $ \C[\G(k,n)] $ has the structure of a cluster algebra. In the case $ k = 2 $ this was proved by Fomin-Zelevinsky \cite{Fomin_2003}, the general case by Scott \cite{scott2006grassmannians}. All Plücker variables are cluster variables, but in general there are other cluster variables too. The frozen cluster variables of $ \C[\G(k,n)] $ are exactly the continuous Plücker variables $ \Delta^{i,i+1,\dots,i+k-1} $. The following Theorem by Oh, Postnikov and Speyer establishes when a family of Plücker variables is a cluster in $ \G(k,n) $:
\begin{theorem}[\cite{oh2015weak}]
    Let $ C $ be a collection of Plücker variables in $ \G(k,n) $. The following are equivalent:
    \begin{itemize}
        \item[(i)] $ C $ is a cluster.
        \item[(ii)] The set of index sets of $ C $ is maximal weakly separated.
    \end{itemize}
\end{theorem}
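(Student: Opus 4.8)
The plan is to prove the two implications separately, using the cluster structure of $\C[\G(k,n)]$ established by Scott \cite{scott2006grassmannians} (and by Fomin--Zelevinsky \cite{Fomin_2003} for $k=2$); the implication $(i)\Rightarrow(ii)$ is the easier one, while $(ii)\Rightarrow(i)$ is the substantial direction and requires a combinatorial model that identifies a maximal weakly separated collection with a plabic graph.

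For $(i)\Rightarrow(ii)$, I would first fix one explicit initial cluster all of whose variables are Plücker coordinates with weakly separated index sets — for instance the labels read off a standard reduced plabic graph for the top positroid cell (for $k=2$ a fan triangulation of the $n$-gon); weak separation of such a family is classical \cite{leclerc1998quasicommuting}, and for the general family it is a direct inspection. Next I would check that a single cluster mutation preserves weak separation: since all Plücker variables are cluster variables and $\G(k,n)$ seeds mutate by three-term Plücker relations, mutating $\Delta^{S\cup\{a,c\}}\in C$ inside the quadrilateral carrying the labels $S\cup\{a,b\}$, $S\cup\{b,c\}$, $S\cup\{c,d\}$, $S\cup\{d,a\}$ (with $a,b,c,d$ in cyclic order) replaces it by $\Delta^{S\cup\{b,d\}}$, and if $S\cup\{b,d\}$ crossed some index set $X$ of $C$ one can trace that crossing back to a crossing of $X$ with one of those four labels, all of which lie in $C$ — a contradiction. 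Connectedness of the exchange graph then yields weak separation for every cluster of Plücker coordinates. Maximality is automatic: a $\C[\G(k,n)]$ seed has $k(n-k)+1$ variables, and by the Leclerc--Zelevinsky purity theorem (proved in \cite{oh2015weak}) every maximal weakly separated collection in $\binom{[n]}{k}$ has the same cardinality, so a weakly separated collection of that size cannot be enlarged.

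For $(ii)\Rightarrow(i)$, the strategy is to attach to a maximal weakly separated collection $\mathcal{C}$ its \emph{plabic tiling}: a $2$-dimensional cell complex with $0$-cells labelled by $\mathcal{C}$, $1$-cells joining labels that differ by a single swap, and $2$-cells given by the maximal ``white'' cliques (sets $S, S\cup\{i\}$ sharing a common $(k-1)$-set $S$) and ``black'' cliques (sets $S, S\setminus\{i\}$ sharing a common $(k+1)$-set $S$). The heart of the matter is to prove that this complex is homeomorphic to a disk whose boundary reads off the consecutive intervals; granting that, its dual is a bicoloured planar graph that one verifies is a reduced plabic graph with decorated permutation $i\mapsto i+k$, whence the theory of plabic graphs \cite{scott2006grassmannians} identifies its collection of face labels — which is exactly $\mathcal{C}$ — with the extended cluster of a seed of $\C[\G(k,n)]$. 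Purity, $|\mathcal{C}|=k(n-k)+1$, then drops out of an Euler-characteristic count on the disk.

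I expect the disk property to be the real obstacle. One route is induction on $n$ via a ``freeze one index'' reduction in the spirit of the role played by $\F_x$ in Section 1: single out $n$, split $\mathcal{C}$ into $\mathcal{C}_n=\{C\in\mathcal{C}:n\in C\}$ and its complement, use the structure of $\mathcal{C}_n$ (a higher-$k$ analogue of Theorem \ref{thm:triangulation}) to peel off a boundary strip of the tiling, and apply the inductive hypothesis to the reduced collection on $[n-1]$. Closer to \cite{oh2015weak}, one instead shows directly that every maximal weakly separated collection admits a mutation — there is always some $\Delta^{S\cup\{a,c\}}\in\mathcal{C}$ that can be swapped for $\Delta^{S\cup\{b,d\}}$ while staying maximally weakly separated — and that these mutations act transitively on the set of all maximal weakly separated collections; transitivity together with the fact that each move is a cluster mutation then transports the ``is a cluster'' property from the standard collection to $\mathcal{C}$. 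Guaranteeing that a mutation is always available, and controlling its effect on the tiling, is the delicate combinatorial core of the whole argument.
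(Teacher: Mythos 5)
The paper does not prove this statement; it is quoted verbatim from \cite{oh2015weak} and used as a black box, so there is no internal proof to compare yours against. Judged on its own terms, your proposal is an accurate roadmap of the Oh--Postnikov--Speyer argument (plabic tilings, the disk property, purity, and transitivity of square moves), but it is a roadmap rather than a proof: the two steps you yourself flag as ``the delicate combinatorial core'' --- that the plabic tiling of a maximal weakly separated collection is a disk, and that every such collection admits a mutation and that these mutations act transitively --- are precisely the content of that paper, and nothing in your sketch supplies them.

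There is also a concrete flaw in your $(i)\Rightarrow(ii)$ argument. You propose to propagate weak separation from a standard initial cluster to an arbitrary cluster of Pl\"ucker variables by walking along the exchange graph and checking that each single mutation preserves weak separation. But for $k\geq 3$ the cluster algebra $\C[\G(k,n)]$ has cluster variables that are not Pl\"ucker coordinates, and the exchange-graph path from the initial seed to $C$ will in general pass through clusters containing such variables, for which ``weak separation of index sets'' is not even defined; moreover not every mutation of a Pl\"ucker cluster is a three-term Pl\"ucker exchange of the quadrilateral type you describe. So connectedness of the exchange graph does not give you what you need. A repair is available --- e.g.\ two Pl\"ucker coordinates in a common cluster quasi-commute in the quantum Grassmannian, and quasi-commutation of quantum minors is equivalent to weak separation by \cite{leclerc1998quasicommuting} and \cite{scott2005quasi}, after which maximality follows from purity --- but that is a different argument from the one you wrote.
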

Therefore we will identify clusters consisting only of Plücker variables with the corresponding maximal weakly separated families of triangles $ \F $. \\

In the following we consider only the case $ k = 3 $. We also assume that a point $ P \in \G(k,n) $ in the Grassmannian is given. Let $ v:\C[\G(k,n)] \to \C $ be the evaluation map at that point.

\begin{rem}
 Unlike with a triangle $ t = \{a,b,c\} $, the order of the variables matters for Plücker-coordinates, we have
 \[
  \Delta^{a b c} = sign(\sigma) \Delta^{\sigma(a) \sigma(b) \sigma(c)}
 \]
 for any permutation $ \sigma \in S(\{a,b,c\}) $. In the following we therefore write $ v(\{a,b,c\}) := v(\Delta^{o(a,b,c)}) $ where $ o $ is a function ordering the indices.

\end{rem}

\begin{defi}[\cite{baur2021friezes}]
Plücker variables of the form $ \Delta^{i,i+1,j} $ for $ i,j \in [n] $ with $ j \neq i+2 $ are called almost continuous Plücker variables.
\end{defi}

We are particularly interested in almost continuous Plücker variables of the form $ \Delta^{i,i+1,i+3} $ and $ \Delta^{i,i+2,i+3} $. Choose $ x \in [n] $. We will see that $ \F_x $ is closely related to two such Plücker variables: $ \Delta^{x-2,x-1,x+1} $ and $ \Delta^{x-1,x+1,x+2} $. We call these the almost continuous Plücker variables in $ x $. To calculate these (or equivalently $ v(t) $ of the corresponding triangles) we need an assumption on $ v $.

\begin{defi}
Let $ \F $ be some cluster of Plücker variables in $ \G(k,n) $ and $ x \in [n] $. If $ v(t) = \lambda $ for all $ t \in \F_x $ and some $ 0 \neq \lambda \in \C $ we call $ (\F,v) $ unitary in $ x $.
\end{defi}

We assume that a family $ \F $ exists such that $ (\F,v) $ is unitary in $ x $. Since $ P $ is a point in projective space we can then assume $ v(t) = 1 $ for all $ t \in \F_x $. We will now calculate $ v(\{x-1,x+1,x+2\}) $ and $ v(\{x-2,x-1,x+1\}) $. For that we need the following mutation of maximal weakly separated families of triangles (and thus of clusters in $ \G(k,n) $).

\begin{lem}[\cite{scott2005quasi} Theorem 3] \label{lem:mutation}
    Let $ \F $ be a maximal weakly separated family of triangles. Let $ z, a, b, c, d \in [n] $ be $ 5 $ pairwise distinct points where $ a, b, c, d $ are cyclically ordered. Assume that the triangles $ \{z,a,b\}, \{z,b,c\},\\ \{z,c,d\}, \{z,d,a\}, \{z,a,c\} $ are all elements of $ \F $. Then $ \{z,b,d\} \notin \F $ and
    \[
    \F^\prime := \F \cup \{\{z,b,d\}\} \backslash \{\{z,a,c\}\}
    \]
    is also a maximal weakly separated family of triangles.
\end{lem}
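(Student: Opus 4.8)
\textit{Proof proposal.} The plan is to establish the three assertions in turn: (1)~$\{z,b,d\}\notin\F$; (2)~$\F'$ is weakly separated; (3)~$\F'$ is maximal. Throughout I fix the cyclic position of $z$ relative to $a,b,c,d$: after possibly applying the relabeling $(a\,c)(b\,d)$, which preserves all the hypotheses, we may take the cyclic order of the five points to be $z,a,b,c,d$ (the one remaining position of $z$, cyclic order $a,z,b,c,d$, is handled by an entirely analogous argument). For~(1): we have $\{z,a,c\}\setminus\{z,b,d\}=\{a,c\}$ and $\{z,b,d\}\setminus\{z,a,c\}=\{b,d\}$, and since $a,b,c,d$ are cyclically ordered both $(a,b,c)$ and $(b,c,d)$ are in cyclic order, so $\{z,a,c\}$ and $\{z,b,d\}$ cross. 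As $\{z,a,c\}\in\F$ and $\F$ is weakly separated, $\{z,b,d\}\notin\F$.

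The engine for (2) and (3) is the following claim: \emph{every triangle $T$ crossing $\{z,b,d\}$ either equals $\{z,a,c\}$ or crosses one of $\{z,a,b\},\{z,b,c\},\{z,c,d\},\{z,d,a\}$.} I would prove this by applying Lemma~\ref{lem:cases_crossing} with $A=\{z,b,d\}$ (in the cyclic order $(z,b,d)$) and $B=T$: this places $T=\{u,v,w\}$ in one of three cases according to which two of the arcs $(z,b),(b,d),(d,z)$ contain its distinguished vertices, and in each case it exhibits a chord of $T$ that crosses $\overline{bd}$. Since $a\in(z,b)$ and $c\in(b,d)$, one then refines each case by the positions of that chord's endpoints relative to $a$ and $c$ and checks directly that the chord must cross one of $\overline{za},\overline{ab},\overline{bc},\overline{cd},\overline{da}$ — equivalently, that $T$ crosses the corresponding one of $\{z,a,b\},\{z,b,c\},\{z,c,d\},\{z,d,a\}$ — unless the vertices left available to $T$ force $T=\{z,a,c\}$. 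In the degenerate sub-cases, where $w$ (or another vertex of $T$) coincides with $z$ or with one of $a,b,c,d$, a second chord of $T$ is occasionally needed; this is precisely the bookkeeping already carried out in the proofs of Lemmas~\ref{lem:three_cases}--\ref{lem:border_triangles}.

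Granting the claim, (2) is immediate: if $T\in\F$ with $T\neq\{z,a,c\}$, then $T$ is weakly separated from each of $\{z,a,b\},\{z,b,c\},\{z,c,d\},\{z,d,a\}$, all of which lie in $\F$, so by the claim $T$ does not cross $\{z,b,d\}$; since $\F'\setminus\{\{z,b,d\}\}=\F\setminus\{\{z,a,c\}\}$, this shows $\F'$ is weakly separated. For~(3), let $S\notin\F'$ be a triangle. If $S=\{z,a,c\}$, then $S$ crosses $\{z,b,d\}\in\F'$. Otherwise $S\notin\F$, so by maximality of $\F$ some $T\in\F$ crosses $S$; if $T\neq\{z,a,c\}$ then $T\in\F'$ and we are done, while if $T=\{z,a,c\}$ we apply the claim with the cyclic relabeling $a\mapsto b\mapsto c\mapsto d\mapsto a$ — which turns $\{z,a,c\}$ into the "diagonal" triangle and the four side triangles into $\{z,b,c\},\{z,c,d\},\{z,d,a\},\{z,a,b\}$, all in $\F'$ — to conclude that $S$ crosses an element of $\F'$ (the alternative $S=\{z,b,d\}$ being excluded, as $\{z,b,d\}\in\F'$). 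Thus no triangle outside $\F'$ can be adjoined, i.e.\ $\F'$ is maximal. Alternatively, (3) follows from the fact that all maximal weakly separated collections of $3$-subsets of $[n]$ have the same cardinality (a consequence of the cluster characterization cited above, since all clusters have the same size) together with $|\F'|=|\F|$.

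The main obstacle is the proof of the claim: it is a finite but somewhat lengthy case analysis in the style pervasive in Section~1, and the care lies in correctly handling the degenerate sub-cases where $T$ shares a vertex with $\{z,b,d\}$ or with $\{a,b,c,d\}$. Everything else — the crossing in (1) and the deductions of (2) and (3) from the claim — is short and formal.
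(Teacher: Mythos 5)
The paper does not prove this lemma at all: it is imported from \cite{scott2005quasi} (Theorem 3), so there is no internal argument to compare yours against, and simply citing that result would have sufficed. Taking your proposal on its own terms, the skeleton is the standard one and the parts you actually carry out are correct: the two diagonals $\{z,a,c\}$ and $\{z,b,d\}$ do cross, so $\{z,b,d\}\notin\F$; the reduction of both weak separation and maximality of $\F'$ to your ``engine'' claim is sound (the claim is a purely combinatorial statement about the configuration of points, so the cyclic relabeling you invoke for maximality is legitimate); and your alternative maximality argument via the constant cardinality $3n-8$ of maximal weakly separated families of triangles (the fact from \cite{oh2015weak} quoted later in the paper) is clean and disposes of that part without reusing the claim.

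The gap is that the claim itself --- every triangle crossing $\{z,b,d\}$ other than $\{z,a,c\}$ crosses one of $\{z,a,b\},\{z,b,c\},\{z,c,d\},\{z,d,a\}$ --- carries essentially all of the content of the lemma, and you do not prove it: you describe the shape of a case analysis and point to Lemmas \ref{lem:three_cases}--\ref{lem:border_triangles} as precedent, but those lemmas do not cover this statement, and the analysis is genuinely delicate in exactly the degenerate positions you flag. For instance $T=\{a,b,c\}$ shares a vertex with the diagonal and with two of the sides, so only single chords are available for several of the comparisons; one must also work with the interleaving characterization of crossing from the Remark after the definition (the literal two-triple phrasing of the paper's crossing definition is strictly weaker and would, for example, declare $\{a,b,c\}$ and $\{z,b,d\}$ crossing even though they are weakly separated). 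Until that claim is actually established --- or the lemma is simply cited, as the paper does --- the proposal is an outline of a correct strategy rather than a proof.
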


In the following we will describe a sequence $ \F, \F^1, ..., \F^r $ of clusters that arise by mutation such that $ \{x-1,x+1,x+2\} $ and $ \{x-2,x-1,x+1\} $ are border triangles in $ \F^r_x $ and such that $ (\F^i,v) $ is unitary in $ x $ for all $ i $. Moreover, if the values in all border triangles of $ \F_x $ are known, we will see how to calculate the values of the border triangles in $ \F^i_x $. For that we need a specific class of Plücker relations:

\begin{lem}[3-term Plücker relations]\label{lem:plücker}
 Let $ z,a,b,c,d \in [n] $ be pairwise distinct with $ \overline{ac} $ and $ \overline{bd} $ crossing. Then
 \[
    \Delta^{o(z,a,c)} \cdot \Delta^{o(z,b,d)} = \Delta^{o(z,a,b)} \cdot \Delta^{o(z,c,d)} + \Delta^{o(z,a,d)} \cdot \Delta^{o(z,b,c)}.
 \]
 Here $ o $ is a function that orders the indices.
\end{lem}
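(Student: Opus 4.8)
The plan is to derive the 3-term Plücker relation from the general Plücker relations stated earlier in the excerpt, namely the fact that $\C[\G(k,n)]$ is the quotient by the ideal generated by
$\sum_{l=0}^{k} (-1)^l x_{i_1,\dots,i_{k-1},j_l} x_{j_0,\dots,\widehat{j_l},\dots,j_k}$.
Here $k=3$, so such a relation involves one fixed pair of indices $\{i_1,i_2\}$ and a quadruple $j_0<j_1<j_2<j_3$; it reads
$\Delta^{i_1 i_2 j_0}\Delta^{j_1 j_2 j_3} - \Delta^{i_1 i_2 j_1}\Delta^{j_0 j_2 j_3} + \Delta^{i_1 i_2 j_2}\Delta^{j_0 j_1 j_3} - \Delta^{i_1 i_2 j_3}\Delta^{j_0 j_1 j_2}=0$.

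First I would take $i_1 i_2$ to consist of the single repeated index $z$ together with one of the four points, which is the standard trick: the three-term relation comes from specializing the four-term relation so that $\{i_1,i_2\}$ shares an element with $\{j_0,\dots,j_3\}$. Concretely, order $z,a,b,c,d$ and set $\{i_1,i_2\}=\{z,b\}$ (reordered) and $\{j_0,j_1,j_2,j_3\}$ the sorted version of $\{z,a,c,d\}$ — wait, that reuses $z$. The cleaner route: use the relation with $k=3$, one row index being $z$ paired with another, but actually the genuinely classical statement is: for $\{i_1,i_2\}=\{z,?\}$ one needs only two row indices. So I would instead invoke the \emph{three-term} Plücker relation directly as the $k=2$-type consequence, obtained by choosing the quadruple of column indices to be $a,b,c,d$ and the single fixed index pair to be $\{z\}\cup\{$one of them$\}$, then cancelling the terms where $z$ is repeated (those vanish since a Plücker coordinate with a repeated index is zero). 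Expanding $\sum_{l}(-1)^l x_{z,j_l} x_{j_0,\dots,\widehat{j_l},\dots,j_3}$ is not literally a $k=3$ relation, so the precise bookkeeping is: take the $k=3$ relation with $\{i_1,i_2\}=\{z,b\}$ sorted and $\{j_0,j_1,j_2,j_3\}=\{a,b,c,d\}$ sorted; two of the four summands contain $b$ twice and drop out, leaving exactly two surviving products, and rearranging plus reordering indices via $\Delta^{abc}=\operatorname{sign}(\sigma)\Delta^{\sigma(a)\sigma(b)\sigma(c)}$ yields the claimed identity up to signs.

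The key steps in order: (1) write down the general Plücker relation for $k=3$ with the chosen index sets; (2) observe that the summands with a repeated index vanish, so only two or three terms remain; (3) carefully track the signs $(-1)^l$ and the reordering signs coming from putting each $\Delta$ into the canonical increasing order, using the cyclic order of $a,b,c,d$ and the position of $z$ among them to fix the function $o$; (4) check that the crossing hypothesis on $\overline{ac}$ and $\overline{bd}$ is exactly what guarantees the three triangles $\{z,a,c\},\{z,b,d\}$ are the "crossing" pair and $\{z,a,b\},\{z,c,d\},\{z,a,d\},\{z,b,c\}$ the "non-crossing" ones, so the relation takes the stated form $\Delta^{o(z,a,c)}\Delta^{o(z,b,d)} = \Delta^{o(z,a,b)}\Delta^{o(z,c,d)} + \Delta^{o(z,a,d)}\Delta^{o(z,b,c)}$ with a plus sign on the right.

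The main obstacle I expect is the sign bookkeeping: the statement is phrased with an ordering function $o$ and a clean $+$ on the right-hand side, so one must verify that after all the reorderings the signs conspire correctly, independently of where $z$ falls in the cyclic order of $a,b,c,d$. I would handle this by reducing to the case where $z$ is the smallest index (so $o(z,\cdot,\cdot)$ just sorts the other two), proving the identity there by direct expansion of the four-term relation, and then arguing that permuting $z$ into another position multiplies every term on both sides by the same sign, hence preserves the identity. The crossing condition enters only to identify which of the five/six products play which role, and is not needed for the algebraic identity itself — any three-term Plücker relation holds — so in fact one could also simply cite it as the well-known three-term (or "short") Plücker relation; but giving the short self-contained derivation above is cleaner for the paper.
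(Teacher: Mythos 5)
There is a concrete error in step (1) of your derivation: the index sets you settle on do not produce the identity in the lemma. With $\{i_1,i_2\}=\{z,b\}$ and $\{j_0,\dots,j_3\}$ the sorted version of $\{a,b,c,d\}$, the surviving terms of the four-term relation are (up to sign and reordering) $x_{zab}x_{bcd}$, $x_{zbc}x_{abd}$ and $x_{zbd}x_{abc}$; three of these six Plücker coordinates do not contain $z$, so what you obtain is the three-term relation \emph{centered at $b$} for the four points $z,a,c,d$, not the relation centered at $z$ that the lemma asserts. The choice you explicitly considered and then discarded because it ``reuses $z$'' is in fact the correct one: take $\{i_1,i_2\}$ to be the sorted version of $\{z,a\}$ and $\{j_0,\dots,j_3\}$ the sorted version of $\{z,b,c,d\}$. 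The paper's definition of the Plücker relations only requires each index family to be internally increasing; it places no disjointness condition between the two families, and the overlap in $z$ is precisely the mechanism that makes the term with $j_l=z$ vanish (its first factor $x_{z,a,z}$ has a repeated index) and forces $z$ into every surviving coordinate, leaving exactly the three products $\Delta^{o(z,a,b)}\Delta^{o(z,c,d)}$, $\Delta^{o(z,a,c)}\Delta^{o(z,b,d)}$, $\Delta^{o(z,a,d)}\Delta^{o(z,b,c)}$. This is also exactly what the paper's own (one-line) proof does: it cites Scott's three-term relation ``with $I=\{z\}$'', i.e.\ the formulation in which $z$ is the shared index of the two families.

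With that correction, the remainder of your plan is sound: one term drops out, and the sign bookkeeping reduces to checking that, after sorting each coordinate via $o$, the product corresponding to the crossing chords $\overline{ac}$, $\overline{bd}$ is the one that ends up isolated on the left with the other two carrying a plus sign. One small caveat on your closing remark: the crossing hypothesis is not merely decorative. The three-term relation always holds in \emph{some} arrangement of signs, but the specific form stated in the lemma (both right-hand terms positive) is exactly the arrangement singled out by $\overline{ac}$ and $\overline{bd}$ crossing; for a non-crossing pairing the analogous equation would acquire a minus sign. So the hypothesis must be used in your step (3), not only in step (4).
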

\begin{proof}
See \cite{scott2006grassmannians} p.5 (1) with $ I = \{z\} $.
\end{proof}

First we will remove all leaves in $ G(\F_x) $ except for $ x-2 $ and $ x+2 $ if they are leaves. Because they correspond to the frozen triangles $ \{x,x+1,x+2\} $ and $ \{x-2,x-1,x\} $ that cannot be removed by mutation.

\begin{lem}[Removing leaves]\label{lem:leaf_removal}
 Let $ (\F,v) $ be unitary in $ x $. Let $ P $ be a triangulation point in $ G(\F_x) $. Let $ Q_2 $ be a leaf in $ P $ and let $ Q_1 $ and $ Q_3 $ either be the previous and next leaves or the previous and next triangulation points, if no such leaves exist. Also assume $ Q_2 \notin \{x-2,x+2\} $. Then there is a mutation to the cluster $ \F^\prime := \F \cup \{\{P,Q_1,Q_3\}\} \backslash \{ \{x,P,Q_2\} \} $. Moreover
 \[
v(\{P,Q_1,Q_3\}) = v(\{P,Q_1,Q_2\}) + v(\{P,Q_2,Q_3\}).
 \] 
\end{lem}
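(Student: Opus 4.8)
The plan is to realize the passage from $\F$ to $\F^\prime$ as a single square move (Lemma~\ref{lem:mutation}) performed at the common point $z = P$, and then to read the value formula off the matching $3$-term Plücker relation (Lemma~\ref{lem:plücker}). In the notation of Lemma~\ref{lem:border_triangles}, write $P = v_i$ and let $P_0 = v_{i-1}$, $P_1 <_x \dots <_x P_s$ the leaves in $P$, and $P_{s+1} = v_{i+1}$. As $Q_2$ is a leaf in $P$ we have $Q_2 = P_j$ for some $1 \le j \le s$, and by construction $Q_1 = P_{j-1}$, $Q_3 = P_{j+1}$. Hence $Q_1 <_x Q_2 <_x Q_3$, so $(x, Q_1, Q_2, Q_3)$ is cyclically ordered and its diagonals $\overline{xQ_2}$ and $\overline{Q_1Q_3}$ cross; these are four pairwise distinct points, the only borderline possibility $Q_1 = Q_3$ forcing $r = 2$, $s = 1$, $j = 1$, in which case the unique leaf of $P$ must be $x+2$ or $x-2$, excluded by hypothesis.

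Next I would check that the five triangles required by Lemma~\ref{lem:mutation}, taken with $z = P$, $a = x$, $b = Q_1$, $c = Q_2$, $d = Q_3$, all lie in $\F$. The triangle $\{P, x, Q_2\}$ is in $\F$ since $Q_2$ is a leaf adjacent to $P$, so $\{P, Q_2\}$ is an edge of $G(\F_x)$. For $\{P, x, Q_1\}$: if $j \ge 2$ then $Q_1$ is again a leaf adjacent to $P$; if $j = 1$ then $Q_1 = v_{i-1}$ and $\{v_{i-1}, v_i\}$ is an edge of the polygon triangulation by Theorem~\ref{thm:triangulation}; either way $\{P, x, Q_1\} \in \F$, and $\{P, x, Q_3\} \in \F$ symmetrically. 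Finally $\{P, Q_1, Q_2\}$ and $\{P, Q_2, Q_3\}$ are border triangles of $\F_x$ and so lie in $\F$ by Lemma~\ref{lem:border_triangles}, unless one of them is one of the two border triangles that lemma excludes; this is where $Q_2 \notin \{x-2, x+2\}$ is used. The excluded border triangles occur only at $v_1$ (between $v_r$ and the first leaf) and at $v_r$ (between the last leaf and $v_1$). If $P = v_1$, then the presence of a leaf in $P$ forces $v_2 \ne x+2$, so by Lemma~\ref{lem:leaf_locations} the point $x+2$ is the $<_x$-smallest leaf $P_1$; from $Q_2 \ne x+2$ we get $j \ge 2$, so $\{P, Q_1, Q_2\}$ is not the excluded triangle at $v_1$, and $P = v_1 \ne v_r$ disposes of $\{P, Q_2, Q_3\}$; the case $P = v_r$ is symmetric with $x-2$. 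With all five triangles in $\F$, Lemma~\ref{lem:mutation} gives $\{P, Q_1, Q_3\} \notin \F$ and that $\F^\prime = \F \cup \{\{P, Q_1, Q_3\}\} \backslash \{\{x, P, Q_2\}\}$ is a maximal weakly separated family, i.e. the claimed mutation.

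For the value, I would apply Lemma~\ref{lem:plücker} with common index $P$ and the crossing diagonals $\overline{xQ_2}$, $\overline{Q_1Q_3}$:
\[
\Delta^{o(P,x,Q_2)} \cdot \Delta^{o(P,Q_1,Q_3)} = \Delta^{o(P,x,Q_1)} \cdot \Delta^{o(P,Q_2,Q_3)} + \Delta^{o(P,x,Q_3)} \cdot \Delta^{o(P,Q_1,Q_2)}.
\]
Applying $v$ and using that $\{P,x,Q_1\}, \{P,x,Q_2\}, \{P,x,Q_3\}$ all lie in $\F_x$, hence each evaluates to the common nonzero value of $(\F,v)$ on $\F_x$ (which we may normalize to $1$), we cancel that value and obtain
\[
v(\{P,Q_1,Q_3\}) = v(\{P,Q_2,Q_3\}) + v(\{P,Q_1,Q_2\}),
\]
which is the assertion. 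One further notes $\F^\prime_x = \F_x \backslash \{\{x,P,Q_2\}\} \subseteq \F_x$, so $(\F^\prime, v)$ is again unitary in $x$.

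The main obstacle is the bookkeeping of the middle paragraph: the two displayed identities are immediate once the configuration is in place, but one has to be sure that $\{P,Q_1,Q_2\}$ and $\{P,Q_2,Q_3\}$ are genuine (non-excluded) border triangles and that $x, Q_1, Q_2, Q_3$ are four distinct points — both of which are exactly what the hypothesis $Q_2 \notin \{x-2, x+2\}$, i.e. that $Q_2$ does not occur in one of the three frozen triangles through $x$, is there to provide.
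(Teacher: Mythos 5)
Your proof is correct and follows essentially the same route as the paper's: the same square move (Lemma \ref{lem:mutation} with $z=P$, $a=x$, $b=Q_1$, $c=Q_2$, $d=Q_3$), the same $3$-term Plücker relation, and division by the common value $v(\{x,P,Q_i\})$ provided by unitarity in $x$. Your middle paragraph is in fact more careful than the paper's proof, which invokes Lemma \ref{lem:border_triangles} without explicitly ruling out its two excluded border triangles or checking distinctness of the five points — details that your use of the hypothesis $Q_2 \notin \{x-2,x+2\}$ handles correctly.
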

\begin{proof}
$ Q_1, Q_2, Q_3 $ are all either leaves in $ P $ or adjacent triangulation points, therefore $ \{x,P,Q_1\}, \{x,P,Q_2\}, \{x,P,Q_3\} \in \F $. Moreover $ \{P,Q_1,Q_2\}, \{P,Q_2,Q_3\} \in \F $ by Lemma \ref{lem:border_triangles}. By setting $ z = P, a = x, b = Q_1, c = Q_2, d = Q_3 $ we are in the situation of Lemma \ref{lem:mutation}. Therefore
\[
\F \cup \{\{P,Q_1,Q_3\}\} \backslash \{\{x,P,Q_2\}\}
\]
is a maximal weakly separated family and thus a cluster as claimed. Also $ \overline{xQ_2} $ and $ \overline{Q_1Q_3} $ are crossing and Lemma \ref{lem:plücker} implies
\[
\Delta^{o(P,x,Q_2)} \cdot \Delta^{o(P,Q_1,Q_3)} = \Delta^{o(P,x,Q_1)} \cdot \Delta^{o(P,Q_2,Q_3)} + \Delta^{o(P,x,Q_3)} \cdot \Delta^{o(P,Q_1,Q_2)}.
\]
We apply $ v $ on both sides. Since $ (\F,v) $ is unitary in $ x $ we have $ v(\{x,P,Q_2\}) = v(\{x,P,Q_1\}) = v(\{x,P,Q_3\}) \neq 0 $ and can divide by this value. This results in
\[
v(\{P,Q_1,Q_3\}) = v(\{P,Q_2,Q_3\}) + v(\{P,Q_1,Q_2\})
\]
and we are done.
\end{proof}

We can now remove all leaves except $ x+2 $ and $ x-2 $ if they are leaves. To progress further we use the following well known fact:

\begin{lem}
    Let $ T $ be a triangulation of an $ n $-gon with $ n \geq 4 $. Then there are at least two nodes with degree $ 2 $ in $ T $ and they are not adjacent.
\end{lem}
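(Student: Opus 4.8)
The plan is to pass to the dual graph of the triangulation. Write $n$ for the number of vertices of $T$; by condition (i) there are $n$ boundary edges, and write $d$ for the number of remaining edges, which I will call diagonals. First I would apply Euler's formula to $T$ regarded as a plane graph (with its unbounded outer face): $n - (n+d) + F = 2$ gives $F = d+2$, so there are $d+1$ inner regions, all triangles by (iii). Counting incidences between edges and inner regions — each boundary edge lies on exactly one triangle and each diagonal on exactly two, by (ii)–(iii) — yields $3(d+1) = n + 2d$, hence $d = n-3$ and $T$ has exactly $n-2$ triangles.

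Next I would form the dual graph $G^\ast$ whose nodes are the $n-2$ triangles and whose edges are the $n-3$ diagonals, two triangle-nodes being joined precisely when the triangles share a diagonal. Since the triangulated polygon is connected, $G^\ast$ is connected, and having $n-2$ nodes and $n-3$ edges it is a tree. For $n \geq 4$ it has at least two nodes, hence at least two leaves, and I would then identify leaves of $G^\ast$ with degree-$2$ vertices of $T$: a leaf of $G^\ast$ is a triangle incident to exactly one diagonal, so it has two boundary edges $uv$ and $vw$, and its apex $v$ has only those two edges (any further edge at $v$ would be a diagonal splitting the interior angle at $v$, which is entirely contained in the leaf triangle), i.e.\ $\deg_T(v) = 2$. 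Conversely a degree-$2$ vertex lies on a unique triangle, which is then a leaf, and distinct leaves have distinct apices. Therefore $T$ has at least two vertices of degree $2$.

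For the non-adjacency claim I would argue by contradiction: suppose $v_1, v_2$ are adjacent vertices of degree $2$. Since the two edges at $v_1$ are its boundary edges, $v_1v_2$ is a boundary edge, so $v_1$ and $v_2$ are consecutive on the polygon; say their boundary neighbours are $v_1^-$ (other than $v_2$) and $v_2^+$ (other than $v_1$). Then the unique triangle containing $v_1$ is $\{v_1^-, v_1, v_2\}$ and the unique triangle containing $v_2$ is $\{v_1, v_2, v_2^+\}$, and for $n \geq 4$ one has $v_1^- \neq v_2^+$, so these are two distinct triangles — both containing the boundary edge $v_1v_2$. This contradicts the fact that a boundary edge lies on exactly one triangle, so no two degree-$2$ vertices are adjacent.

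The only step that requires genuine care is the opening count: applying Euler's formula to the correct plane graph and doing the edge–region bookkeeping so that $n-2$ triangles and a dual tree drop out. Once that is in place, the leaf–apex correspondence and the adjacency argument are purely combinatorial and short. (Alternatively one could run an induction on $n$ with base case $n=4$, but the dual-tree argument avoids the need to separately justify the existence of an ear at each inductive step.)
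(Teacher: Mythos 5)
Your proof is correct. There is nothing in the paper to compare it with: the lemma is invoked there as a ``well known fact'' and no proof is given, so your argument supplies the missing justification, and it is the standard dual-tree one. The bookkeeping checks out: Euler's formula together with the edge--face incidence count gives $d=n-3$ diagonals and $n-2$ triangles; the weak dual $G^\ast$ (triangles joined along shared diagonals) has $n-2$ nodes and $n-3$ edges, hence is a tree once connectivity is known, and for $n\geq 4$ its two leaves yield two ears whose apices have degree $2$ (your observation that a further edge at the apex would have to cross the opposite side of the ear is exactly the non-crossing condition (ii), and distinct leaves have distinct apices since a degree-$2$ vertex meets only one inner face). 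The non-adjacency argument via the boundary edge $v_1v_2$ lying on two distinct triangles is also sound and in fact proves the stronger statement that no two degree-$2$ vertices are adjacent when $n\geq 4$. The one place where you assert rather than argue is the connectivity of $G^\ast$: connectedness of the triangulated polygon does not formally transfer to the weak dual (for general plane graphs it need not), so a sentence is warranted --- e.g.\ join interior points of two triangles by an arc inside the polygon avoiding vertices; such an arc can only cross diagonals, and consecutive faces along it are dual-adjacent; alternatively, a cycle in $G^\ast$ would enclose a polygon vertex in the interior, which is impossible. This is a minor polish item, not a gap; the other standard route would be induction on $n$ via the existence of an ear, which, as you say, your argument avoids.
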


This implies that after removing the leaves, $ G(\F_x) $ will have at least two triangulation points of degree $ 2 $, unless one of these triangulation points is $ x+1 $ or $ x-1 $. Because in that case they may still have a leaf in $ x+2 $ or $ x-2 $. \\
However $ x+1 $ and $ x-1 $ are adjacent and cannot both have degree $ 2 $ in the triangulation, therefore $ G(\F_x) $ contains at least one triangulation point $ P_i $ of degree $ 2 $ with $ P_i \notin \{x+1, x-1\} $.

\begin{lem}[Removing order 2 nodes]\label{lem:order_2_node_removal}
\leavevmode \\
Let $ (\F, v) $ be unitary in $ x $. Let $ P_i $ be a triangulation point in $ \F_x $ of order $ 2 $ with $ P_i \notin \{ x-1, x+1 \} $. Let $ P_{i-1} $ and $ P_{i+1} $ be the previous and next triangulation point. Let $ Q_1 $ be the last leaf in $ P_{i-1} $ or the previous triangulation point, if no such leaf exists. Let $ Q_2 $ be the first leaf in $ P_{i+1} $ or the next triangulation point if no such leaf exists. \\
If $ P_i \neq x+2 $ then there is a mutation to the cluster
\[\F^\prime := \F \cup \{\{Q_1, P_{i-1}, P_{i+1}\} \} \backslash \{\{x, P_{i-1}, P_i\}\}.
\] \\
Similarly if $ P_i \neq x-2 $ then there is a mutation to the cluster
\[
\F^\prime := \F \cup \{\{P_{i-1}, P_{i+1}, Q_2\} \} \backslash \{\{x, P_i, P_{i+1}\}\}.
\]
Furthermore in the first case we have
\[
v(\{Q_1, P_{i-1}, P_{i+1}\}) = v(\{Q_1, P_{i-1}, P_i\}) + v(\{P_{i-1}, P_i, P_{i+1}\})
\]
and in the second case
\[
v(\{P_{i-1}, P_{i+1}, Q_2\}) = v(\{P_i, P_{i+1}, Q_2\}) + v(\{P_{i-1}, P_i, P_{i+1}\}).
\]
\end{lem}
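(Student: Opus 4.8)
The plan is to obtain each of the two claims as an instance of the mutation of Lemma \ref{lem:mutation} together with the $3$-term Plücker relation of Lemma \ref{lem:plücker}, applied to one suitably chosen quadrilateral. I will spell out the first statement; the second is the mirror image under reversing the order $<_x$, with $x-2$ playing the role of $x+2$.

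First I would collect the triangles of $\F$ visible near $P_i$ in $\F_x$. Since $P_i$ has degree $2$ in $G(\F_x)$ and $P_i\notin\{x-1,x+1\}=\{v_1,v_r\}$, its only neighbours are $P_{i-1}$ and $P_{i+1}$, so $\{x,P_{i-1},P_i\},\{x,P_i,P_{i+1}\}\in\F$; as $P_i$ then has degree $2$ in the triangulation $T$ of Theorem \ref{thm:triangulation}, the edge $\{P_{i-1},P_{i+1}\}$ lies in $T$ and hence $\{x,P_{i-1},P_{i+1}\}\in\F$. Applying Lemma \ref{lem:border_triangles} at the leaf-free triangulation point $P_i$ gives the triangle $\{P_{i-1},P_i,P_{i+1}\}\in\F$. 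Finally, by the definition of $Q_1$ one has $\{x,P_{i-1},Q_1\}\in\F$ (it is either the triangle of a leaf in $P_{i-1}$ or the edge between the consecutive triangulation points $P_{i-2},P_{i-1}$), and $\{P_{i-1},Q_1,P_i\}$ is a border triangle of $\F_x$ at $P_{i-1}$.

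The delicate point, which I expect to be the main obstacle, is establishing $\{P_{i-1},Q_1,P_i\}\in\F$. If $Q_1$ is the last leaf in $P_{i-1}$, this is the border triangle with index $j=s\ge 1$ at $B=P_{i-1}\ne v_r$, so Lemma \ref{lem:border_triangles} gives it outright; but if $P_{i-1}$ has no leaves, so $Q_1=P_{i-2}$, then $\{P_{i-1},P_{i-2},P_i\}$ is the border triangle with $j=0$, which Lemma \ref{lem:border_triangles} does not guarantee when $P_{i-1}=v_1$. I would rule that out: if $P_{i-1}=v_1=x+1$ has no leaves, then in $G(\F_x)$ it is joined only to $v_r=x-1$ (through the continuous triangle $\{x-1,x,x+1\}$) and to $v_2$, so it has degree $2$ in $T$; since $x+2$ is always a vertex of $G(\F_x)$ and is the immediate $<_x$-successor of $x+1$, it cannot be a leaf of $v_1$ (there are none), nor a leaf of $v_2$ (that would give $P_i=v_2$ degree $\ge 3$), nor any vertex $>_x v_2$, so $x+2=v_2=P_i$, contradicting $P_i\ne x+2$. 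Thus this configuration does not arise and $\{P_{i-1},Q_1,P_i\}\in\F$ in all cases.

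Now I would apply Lemma \ref{lem:mutation} with $z=P_{i-1}$ and the cyclically ordered quadrilateral $(a,b,c,d)=(x,Q_1,P_i,P_{i+1})$: the required triangles $\{z,a,b\},\{z,b,c\},\{z,c,d\},\{z,d,a\},\{z,a,c\}$ are exactly $\{x,P_{i-1},Q_1\}$, $\{P_{i-1},Q_1,P_i\}$, $\{P_{i-1},P_i,P_{i+1}\}$, $\{x,P_{i-1},P_{i+1}\}$, $\{x,P_{i-1},P_i\}$, all shown above to lie in $\F$. The lemma then yields $\{Q_1,P_{i-1},P_{i+1}\}=\{z,b,d\}\notin\F$ and that $\F'=\F\cup\{\{Q_1,P_{i-1},P_{i+1}\}\}\backslash\{\{x,P_{i-1},P_i\}\}$ is again maximal weakly separated; since $P_i\ne x+2$, the removed $\{x,P_{i-1},P_i\}$ is not a frozen (continuous) triangle, so this is a genuine cluster mutation. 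For the value I would feed the same points into Lemma \ref{lem:plücker} (the chords $\overline{xP_i}$ and $\overline{Q_1P_{i+1}}$ cross), obtaining
\[
v(\{x,P_{i-1},P_i\})\,v(\{Q_1,P_{i-1},P_{i+1}\}) = v(\{x,P_{i-1},Q_1\})\,v(\{P_{i-1},P_i,P_{i+1}\}) + v(\{x,P_{i-1},P_{i+1}\})\,v(\{Q_1,P_{i-1},P_i\}),
\]
and since $\{x,P_{i-1},P_i\}$, $\{x,P_{i-1},Q_1\}$, $\{x,P_{i-1},P_{i+1}\}$ all belong to $\F_x$ and are therefore sent to $1$ by unitarity of $(\F,v)$ in $x$, dividing gives $v(\{Q_1,P_{i-1},P_{i+1}\})=v(\{Q_1,P_{i-1},P_i\})+v(\{P_{i-1},P_i,P_{i+1}\})$, as claimed. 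The second statement follows by the analogous argument with $z=P_{i+1}$ and the cyclically ordered quadrilateral $(x,P_{i-1},P_i,Q_2)$; there the potentially unguaranteed border triangle $\{P_{i+1},P_i,Q_2\}$ only occurs when $P_{i+1}=v_r$ has no leaves, which by the mirrored argument forces $P_i=x-2$ and is thus excluded by hypothesis.
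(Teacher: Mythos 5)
Your proof is correct and follows essentially the same route as the paper: the same quadrilateral $ (z;a,b,c,d) = (P_{i-1};x,Q_1,P_i,P_{i+1}) $ fed into Lemma \ref{lem:mutation} and into the three-term Plücker relation of Lemma \ref{lem:plücker}, with unitarity in $ x $ reducing the three factors containing $ x $ to $ 1 $. Your additional step ruling out the exceptional case of Lemma \ref{lem:border_triangles} (namely $ P_{i-1} = v_1 = x+1 $ with no leaves, so that $ Q_1 = x-1 $) is a worthwhile refinement that the paper's proof passes over silently, and it is also exactly what guarantees the asserted cyclic order of $ (x,Q_1,P_i,P_{i+1}) $.
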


\begin{proof}
We focus on the case $ P_i \neq x+2 $ since the case $ P_i \neq x-2 $ is symmetrical. We have $ \{x,Q_1,P_{i-1}\} \in \F $ since $ Q_1 $ is either a leaf in $ P_{i-1} $ or the previous triangulation point. We also have $ \{x,P_{i-1},P_i\} \in \F $ since $ P_{i-1} $ and $ P_i $ are adjacent triangulation points. And the fact that $ P_i $ has order two implies $ \{x,P_{i-1},P_{i+1}\} \in \F $ because otherwise the region next to $ P_i $ in the triangulation would not be a triangle. Additionally Lemma \ref{lem:border_triangles} shows $ \{P_{i-1},Q_1,P_i\}, \{P_{i-1},P_i,P_{i+1}\} \in \F $. We set $ z = P_{i-1}, a = x, b = Q_1, c = P_i, d = P_{i+1} $. Here $ (a,b,c,d) $ are cyclically ordered since $ P_i \neq x+2 $. Lemma \ref{lem:mutation} implies that
\[
\F \cup \{\{P_{i-1},Q_1,P_{i+1}\}\} \backslash \{\{P_{i-1},x,P_i\}\}
\]
is a maximal weakly separated family and thus a cluster of Plücker variables. Moreover $ \overline{xP_i} $ and $ \overline{Q_1P_{i+1}} $ are crossing. Therefore Lemma \ref{lem:plücker} yields
\begin{align*}
\Delta^{o(P_{i-1},x,P_i)} \cdot \Delta^{o(P_{i-1},Q_1,P_{i+1})} &= \Delta^{o(P_{i-1},x,Q_1)} \cdot \Delta^{o(P_{i-1},P_i,P_{i+1})} \\
&+ \Delta^{o(P_{i-1},x,P_{i+1})} \cdot \Delta^{o(P_{i-1},Q_1,P_i)}.
\end{align*}
By applying $ v $ and using  the fact that $ (\F,v) $ is unitary in $ x $ we get
\[
v(\{P_{i-1},Q_1,P_{i+1}\}) = v(\{P_{i-1},P_i,P_{i+1}\}) + v(\{P_{i-1},Q_1,P_i\})
\]
as claimed.
\end{proof}

We can now formulate an algorithm. In it we denote the triangulation points in $ G(\F_x) $ with $ P_1,\dots,P_s $ and the leaves in $ P_i $ with $ Q_{i,1},\dots,Q_{i,r_i-1} $. We write $ Q_{i,0} := P_{i-1} $ and $ Q_{i,r_i} := P_{i+1} $. The algorithm assumes $ v(t) = 1 $ for all $ t \in \F_x $.

\begin{algorithm}[H] \label{alg:quiddity}
\KwData{Graph $ G(\F_x) $ and the valuations of all border triangles}
\ForEach{triangulation point $ P_i \notin \{x+1,x-1\} $}{
    Remove all leaves in $ P_i $ \\
    $ label(P_i) := \Sigma_{j=0}^{r_i-1}v(\{P_i,Q_{i,j},Q_{i,j+1}\})$
}
\If{$ x+2 $ is a leaf}{
    Remove all leaves in $ x + 1 $ except $ x+2 $ \\
    $ label(x+1) := \Sigma_{j=1}^{r_1-1} v(\{x+1,Q_{1,j},Q_{1,j+1}\}) $
}
\If{$ x-2 $ is a leaf}{
    Remove all leaves in $ x + 1 $ except $ x-2 $ \\
    $ label(x-1) := \Sigma_{j=0}^{r_s-2} v(\{x-1,Q_{s,j},Q_{s,j+1}\}) $
}
\While{ $ G(\F_x) $ has more than $ 3 $ edges}{
    Find a triangulation point $ P_i \notin \{x+1,x-1\} $ with degree $ 2 $ \\
    \uIf{$ P_i = x+2 $}{
        $ label(x+1) := label(x+2) $ \\
        Remove the edge to the other neighbour $ P_j $ of $ x+2 $ \\
        $ label(P_j) := label(P_j) + label(x+2) $ \\
        Remove the label at $ x+2 $
    }
    \uElseIf{$ P_i = x-2 $}{
        $ label(x-1) := label(x-2) $ \\
        Remove the edge to the other neighbour $ P_j $ of $ x-2 $ \\
        $ label(P_j) := label(P_j) + label(x-2) $ \\
        Remove the label at $ x-2 $
    }
    \Else{
        Let $ P_j $ and $ P_k $ be the two neighbours of $ P_i $ \\
        $ label(P_j) := label(P_j) + label(P_i) $ \\
        $ label(P_k) := label(P_k) + label(P_i) $ \\
        Remove $ P_i $ and its edges
    }
}
Output $ v(\{x-1,x+1,x+2\}) = label(x+1) $ \\
Output $ v(\{x-2,x-1,x+1\}) = label(x-1) $
\caption{Almost continuous Plücker variables in $ x $}
\end{algorithm}

\begin{proof}
   The label in $ P_i $ is equal to $ v(\{P_{i-1},P_i,P_{i+1}\}) $ for $ i \notin \{1,s\} $. The label in $ x+1 $ is $ v(\{x+1, x+2, P_2\}) $ if $ x+2 $ is a leaf. Similarly the label in $ x-1 $ is $ v(\{P_{s-1},x-2,x-1\}) $ if $ x-2 $ is a leaf. By Lemma \ref{lem:leaf_removal} all of these labels are initialized to the right values. All operations after that are allowed by Lemma \ref{lem:order_2_node_removal}. In the end only the frozen edges $ \{x-2,x-1\}, \{x-1,x+1\}, \{x+1,x+2\} $ remain. Now the only border triangles are $ \{x-1,x+1,x+2\} $ and $ \{x-2,x-1,x+1\} $ which are now the labels at the only remaining triangulation points $ x+1 $ and $ x-1 $.
\end{proof}

If the cluster $ \F $ is specialized to $ 1 $, i.E. $ v(t) = 1 $ for all $ t \in \F $, we can use this method to calculate $ v(\{i,i+1,i+3\}) $ and $ \{i,i+1,i+3\} $ for all $ i \in [n] $. \\
If the values of all these  are known we can also calculate the value of all other almost continuous triangles. We use the following notation:

\begin{notation}\label{not:plücker}
We write
\[
\Delta_k(i) := v(\{i,i+1,i+k+2\})
\]
and
\[
\Delta^k(i) := v(\{i,i+k+1,i+k+2\})
\]
for the values of $ v $ at the almost continuous Plücker variables.
\end{notation}
\begin{theorem} \label{thm:plücker_recursion}
Let $ v : \C[\G(3,n)] \to \C $ be an algebra homomorphism. Assume that $ v(\Delta^{i,i+1,i+2}) = 1 $ for all continuous Plücker relations. For $ i \in [n] $ and $ 3 \leq k < n-3 $ we have the recursive formulas
\[
\Delta_k(i) = \Delta_1(i) \Delta_{k-1}(i+1) - \Delta^1(i+1) \Delta_{k-2}(i+2) + \Delta_{k-3}(i+3)
\]
and
\[
\Delta^k(i) = \Delta^1(i+k-1) \Delta^{k-1}(i) - \Delta_1(i+k-2) \Delta^{k-2}(i) + \Delta^{k-3}(i).
\]
For $ k = 2 $ we have
\[
\Delta_2(i) = \Delta_1(i)\Delta_1(i+1) - \Delta^1(i+1)
\]
and
\[
\Delta^2(i) = \Delta^1(i+1) \Delta^1(i) - \Delta_1(i).
\]
\end{theorem}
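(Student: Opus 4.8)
The plan is to give $v$ a concrete linear-algebra model and then push everything through a single recursion among the columns of a matrix. Since $v\colon\C[\G(3,n)]\to\C$ is a $\C$-algebra homomorphism, it is evaluation at a closed point of the affine cone over $\G(3,n)$ inside $\bigwedge^{3}\C^{n}$; the hypothesis $v(\Delta^{i,i+1,i+2})=1\neq 0$ rules out the cone point, so this point is a nonzero decomposable tensor $w_{1}\wedge w_{2}\wedge w_{3}$. Taking $w_{1},w_{2},w_{3}$ as the rows of a $3\times n$ matrix and letting $p_{1},\dots,p_{n}\in\C^{3}$ be its columns, we get $v(\{a,b,c\})=\det(p_{a},p_{b},p_{c})$ whenever $a<b<c$; for an arbitrary triple one reorders into increasing order at the cost of the sign of the permutation. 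In particular $\det(p_{i},p_{i+1},p_{i+2})=1$, so $(p_{i},p_{i+1},p_{i+2})$ is a basis of $\C^{3}$ for every $i$.

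The crucial step is to expand $p_{i+3}$ in this basis. By Cramer's rule its coefficients are $3\times3$ determinants divided by $\det(p_{i},p_{i+1},p_{i+2})=1$; using $v(\{i+1,i+2,i+3\})=1$, $v(\{i,i+1,i+3\})=\Delta_{1}(i)$ and $v(\{i,i+2,i+3\})=\Delta^{1}(i)$ together with the sign of a single transposition, one obtains
\[
p_{i+3}=\Delta_{1}(i)\,p_{i+2}-\Delta^{1}(i)\,p_{i+1}+p_{i},
\qquad\text{equivalently}\qquad
p_{i}=p_{i+3}-\Delta_{1}(i)\,p_{i+2}+\Delta^{1}(i)\,p_{i+1}.
\]
(One could also extract this identity from the $3$-term Plücker relations of Lemma~\ref{lem:plücker}, but the matrix form is the most transparent.)

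Now I would substitute. Writing $\Delta_{k}(i)=\det(p_{i},p_{i+1},p_{i+k+2})$ and replacing the first column by $p_{i+3}-\Delta_{1}(i)p_{i+2}+\Delta^{1}(i)p_{i+1}$, multilinearity kills the $p_{i+1}$-summand (a repeated column), and after reordering indices into increasing order one is left with $\Delta_{k}(i)=\Delta_{1}(i)\Delta_{k-1}(i+1)-v(\{i+1,i+3,i+k+2\})$. When $k=2$ the triple $\{i+1,i+3,i+4\}$ is already almost continuous and equals $\Delta^{1}(i+1)$, which is the asserted $k=2$ formula. When $k\ge 3$ I apply the recursion once more, now to $p_{i+1}=p_{i+4}-\Delta_{1}(i+1)p_{i+3}+\Delta^{1}(i+1)p_{i+2}$ inside $\det(p_{i+1},p_{i+3},p_{i+k+2})$; again one summand vanishes and reordering gives $v(\{i+1,i+3,i+k+2\})=\Delta^{1}(i+1)\Delta_{k-2}(i+2)-\Delta_{k-3}(i+3)$, and plugging this back in is precisely the claimed recursion for $\Delta_{k}(i)$. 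The identities for $\Delta^{k}(i)$ follow from the mirror computation---substitute the recursion into $\det(p_{i},p_{i+k+1},p_{i+k+2})$ and reduce from the top index---or, more cheaply, by applying the $\Delta_{k}$-identities to the point obtained by reversing the cyclic order on $[n]$, which interchanges $\Delta_{k}\leftrightarrow\Delta^{k}$ and $\Delta_{1}\leftrightarrow\Delta^{1}$.

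The only genuine obstacle is bookkeeping: tracking the signs introduced when the three columns of a $3\times3$ determinant are permuted into increasing index order, and checking that the hypothesis $3\le k<n-3$ is exactly what forces every index triple occurring along the way (such as $\{i+1,i+3,i+k+2\}$, $\{i+2,i+3,i+k+2\}$ and $\{i+3,i+4,i+k+2\}$) to consist of three distinct points in the expected cyclic position, so that the resulting symbols $\Delta_{\ell}$ and $\Delta^{\ell}$ are the intended almost continuous ones. Beyond the single recursion $p_{i+3}=\Delta_{1}(i)p_{i+2}-\Delta^{1}(i)p_{i+1}+p_{i}$ no new idea is needed.
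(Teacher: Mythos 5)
Your proposal is correct, but it takes a genuinely different route from the paper's proof. The paper never leaves the coordinate ring: for each formula it applies the three-term Plücker relation of Lemma \ref{lem:plücker} twice (introducing the auxiliary quantities $v(\Delta^{i+1,i+3,i+k+2})$ and $v(\Delta^{i,i+k+1,i+k+3})$, which are precisely your intermediate determinants $\det(p_{i+1},p_{i+3},p_{i+k+2})$ and $\det(p_i,p_{i+k+1},p_{i+k+3})$), divides by the continuous coordinates, and eliminates the auxiliary term. You instead realize $v$ as evaluation at a nonzero decomposable tensor --- legitimate over $\C$, since $v(\Delta^{i,i+1,i+2})=1\neq 0$ excludes the vertex of the cone --- and encode everything in the single column recursion $p_{i+3}=\Delta_1(i)\,p_{i+2}-\Delta^1(i)\,p_{i+1}+p_i$, after which both recursions follow from multilinearity of the determinant; I checked the two substitutions and the resulting signs, and they reproduce the stated formulas exactly, including the $k=2$ cases. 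The trade-off: the paper's argument is purely relational, so it applies verbatim to a homomorphism into any ring without choosing a point, while yours is more conceptual (it is the ``tame frieze as a linear difference equation'' picture) and the sign bookkeeping you worry about is lighter than you fear, because cyclic permutations of a triple are even, so $\det(p_a,p_b,p_c)=v(\{a,b,c\})$ for every cyclically ordered triple, even across the wrap-around modulo $n$. One small caution: the shortcut of deducing the $\Delta^k$ identities by ``reversing the cyclic order'' needs a compensating sign, since the naive reversal sends the continuous coordinates to $-1$ (negate the columns, or simply carry out the direct mirror substitution, which works verbatim and is the cleaner of your two options).
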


\begin{proof}
    Apply Lemma \ref{lem:plücker} with $ z = i+1, a = i, b = i+2, c = i+3 $ and $ d = i + k + 2 $. This yields
    \[
    \Delta^{i,i+1,i+3} \Delta^{i+1,i+2,i+k+2} = \Delta^{i,i+1,i+2} \Delta^{i+1,i+3,i+k+2} + \Delta^{i,i+1,i+k+2} \Delta^{i+1,i+2,i+3}.
    \]
After using $ v $ on both sides we get
\[
\Delta_1(i) \Delta_{k-1}(i+1) = 1 \cdot v(\Delta^{i+1,i+3,i+k+2}) + \Delta_k(i) \cdot 1.
\]
We write $ \Delta^1_k(i) := v(\Delta^{i,i+2,i+k+3}) $. Then
\begin{equation}\label{eq:below}
    \Delta_k(i) = \Delta_1(i) \Delta_{k-1}(i+1) - \Delta^1_{k-2}(i+1).
\end{equation}
Now apply Lemma \ref{lem:plücker} with $ z = i+2, a = i, b = i+1, c = i+3, d = i+k+3 $:
\[
\Delta^{i,i+2,i+3} \Delta^{i+1,i+2,i+k+3} = \Delta^{i,i+1,i+2} \Delta^{i+2,i+3,i+k+3} + \Delta^{i,i+2,i+k+3} \Delta^{i+1,i+2,i+3}.
\]
After applying $ v $ this becomes
\begin{equation} \label{eq:below_up}
    \Delta^1_k(i) = \Delta^1(i) \Delta_k(i+1) - \Delta_{k-1}(i+2)
\end{equation}
Now combining (\ref{eq:below}) and (\ref{eq:below_up}) yields
\[
\Delta_k(i) = \Delta_1(i) \Delta_{k-1}(i+1) -  \Delta^1(i+1) \Delta_{k-2}(i+2) + \Delta_{k-3}(i+3).
\]
For the other formula we apply Lemma \ref{lem:plücker} with $ z = i+k+1, a = i, b = i+k-1, c = i+k $ and $ d = i+k+2 $:
\begin{align*}
\Delta^{i,i+k,i+k+1} \Delta^{i+k-1,i+k+1,i+k+2} &= \Delta^{i,i+k-1,i+k+1} \Delta^{i+k,i+k+1,i+k+2}\\
&+ \Delta^{i,i+k+1,i+k+2} \Delta^{i+k-1,i+k,i+k+1}
\end{align*}
We set $ \Delta^k_1(i) := \Delta^{i,i+k+1,i+k+3} $. By using $ v $ we get
\begin{equation} \label{eq:up}
    \Delta^k(i) = \Delta^{k-1}(i) \Delta^1(i+k-1) - \Delta^{k-2}_1(i).
\end{equation}
Finally apply Lemma $ \ref{lem:plücker} $ with $ z = i+k+1, a = i, b = i+k, c = i+k+2 $ and $ d = i+k+3 $:
\begin{align*}
\Delta^{i,i+k+1,i+k+2} \Delta^{i+k,i+k+1,i+k+3} &= \Delta^{i,i+k,i+k+1} \Delta^{i+k+1,i+k+2,i+k+3}  \\
&+ \Delta^{i, i+k+1, i+k+3} \Delta^{i+k,i+k+1,i+k+2}
\end{align*}
which becomes
\begin{equation} \label{eq:up_below}
    \Delta^k_1(i) = \Delta^k(i) \Delta_1(i+k) - \Delta^{k-1}(i)
\end{equation}
after using $ v $. Together with (\ref{eq:up}) we get
\[
\Delta^k(i) = \Delta^1(i+k-1) \Delta^{k-1}(i) - \Delta_1(i+k-2) \Delta^{k-2}(i) + \Delta^{k-3}(i).
\]
The formulas for $ k = 2 $ follow immediately from (\ref{eq:below}) and (\ref{eq:up}), because
\[
\Delta_0^1(i+1) = v(\Delta^{i+1,i+3,i+4}) = \Delta^1(i+1)
\]
and
\[
\Delta_1^0(i) = v(\Delta^{i,i+1,i+3}) = \Delta_1(i).
\]
\end{proof}

\section{Application to tame $ \SL_3 $-frieze patterns}

\begin{defi}[\cite{cordes1972generalized}]
Let $ w \in \N_{>0} $ and let $ R $ be an integral domain. Consider the following infinite array
\[
\xymatrix@C=0.06em@R=.09em{
&&&&&&&& \dots && 0 && 0 && 0 && \dots \\
&&&&&&& \dots && 0 && 0 && 0 && \dots \\
&&&&&& \dots && 1 && 1 && 1 && \dots \\
&&&&& \dots && m_{0,w} && m_{1,w} && m_{2,w} && \dots \\
&&&&&&&& \iddots \\
&&& \dots && m_{0,1} && m_{1,1} && m_{2,1} && \dots \\
&& \dots && 1 && 1 && 1 && \dots \\
& \dots && 0 && 0 && 0 && \dots \\
 \dots && 0 && 0 && 0 && \dots
}
\]
where the $ 2 $ highest and lowest rows only contain $ 0 $, the next rows after that contain only $ 1 $ and all other entries $ m_{i,j} $ for $ i \in \Z $ and $ j \in \{1,\dots,w\} $ are elements of $ R $.
\begin{itemize}
    \item[(i)] The array is called a $ \SL_3 $-frieze pattern if all $ 3 \times 3 $-diamonds of entries have determinant $ 1 $. $ w $ is called the width of the frieze pattern.
    \item[(ii)] If all $ 4 \times 4 $-diamonds have determinant $ 0 $ the frieze pattern is called tame.
    \item[(iii)] If all entries are positive integers, the frieze pattern is called integral.
\end{itemize}
\end{defi}

\begin{rem}
    Let $ F $ be a tame $ \SL_3 $-frieze of width $ w $. Then $ F $ has a horizontal period of $ w + 4 $. In other words $ m_{i,j} = m_{i+w+4,j} $ for all $ i \in \Z $ and $ j \in \{1, \dots, w\} $. In particular $ F $ is then completely described by the fundamental region containing only the nontrivial elements $ m_{i,j} $ with $ i \in \{0, \dots, w+3\} $ and $ j \in \{1, \dots, w\} $.
\end{rem}
\begin{proof}
    See \cite{morier2014linear} Corollary 7.1.1.
\end{proof}

\begin{defi}
Let $ \cA $ be a cluster algebra of rank $ m $, that is all clusters in $ \cA $ have cardinality $ m $. Let $ a = (a_1,\dots,a_m) \in (\C\backslash \{ 0 \})^m $ and let $ C = (x_1, \dots, x_m) $ be a cluster in $ \cA $. Then the algebra homomorphism $ \cA \to \C $ that is determined by $ x_i \mapsto a_i $ is called the specialization of $ C $ to $ a $. If $ a = (1, \dots, 1) $ we call this the specialization of $ C $ to $ 1 $.
\end{defi}

\begin{rem}
For $ n \geq 3 $ a maximal weakly separated family of triangles in $ [n] $ has cardinality $ 3n - 8 $. In particular the cluster algebra $ \C[\G(3,n)] $ has rank $ 3n-8 $.
\end{rem}
\begin{proof}
See \cite{oh2015weak} Theorem 3.3.
\end{proof}

In \cite{baur2021friezes} it was shown that there is a tame $ \SL_3 $-frieze pattern $ \cP_{(3,n)} $ of width $ w = n - 4 $ over $ \C[\G(3,n)] $ with $ m_{i,j} = \Delta^{o(i,i-1,j+i-1)} $. Here $ o $ is a function that orders the indices and the indices are taken$ \mod \; n $. This frieze is called the Plücker frieze of type $ (3,n) $.

\begin{rem}
Let $ \phi: \C[\G(3,n)] \to \C $ be the algebra homomorphism from specializing a cluster. Applying $ \phi $ to all entries of $ \cP_{(3,n)} $ results in a tame $ \SL_3 $-frieze pattern $ \phi(\cP_{(3,n)}) $ over $ \C $ by \cite{baur2021friezes} Remark 3.11. Moreover \cite{morier2014linear} Proposition 3.2.1 shows that all tame $ \SL_3 $-frieze patterns of width $ n-4 $ have this form. And if $ \phi $ arises by specialising a cluster to $ 1 $ then \cite{baur2021friezes} Corollary 3.12 shows that $ \phi(\cP_{(3,n)}) $ is integral. Friezes of this form are called unitary.
\end{rem}

Using Notation \ref{not:plücker} with $ v = \phi $ we can write the fundamental region of $ \phi(\cP_{(3,n)}) $ as
\[
\xymatrix@C=0.06em@R=.09em{
&&&& \Delta^1(1) && \Delta^1(2) && \dots && \Delta^1(n)\\
&&& \Delta^2(1) && \Delta^2(2) && \dots && \Delta^2(n)\\
&&&&&& \iddots \\
& \Delta_2(1) && \Delta_2(2) && \dots && \Delta_2(n)\\
\Delta_1(1) && \Delta_1(2) && \dots && \Delta_1(n)\\
}
\]

Now Theorem \ref{thm:plücker_recursion} allows us to easily calculate all other entries if the first and last row are known. If the frieze arises from specializing a cluster of Plücker variables to $ 1 $ then Algorithm \ref{alg:quiddity} can be used to calculate the first and last row from the family $ \F $ that corresponds to the cluster.

\newpage

\bibliographystyle{alphaurl}
\newcommand{\etalchar}[1]{$^{#1}$}

\end{document}